\newtheorem{theorem}{Theorem}[section]
\newtheorem{lemma}[theorem]{Lemma}
\newtheorem{corollary}[theorem]{Corollary}
\theoremstyle{remark}
\newtheorem{remark}[theorem]{Remark}
\numberwithin{equation}{section}
\newcommand{\R}{{\mathbb R}}
\newcommand{\N}{{\mathbb N}}
\newcommand{\Z}{{\mathbb Z}}
\newcommand{\nH}{h} %la transformada de Hankel-Hirschman
\newcommand{\supp}{\mathop{\bf supp}}
\DeclareMathOperator*{\Rn}{(0,\infty)^{n}}
\begin{document}

\title{A Liouville theorem for some Bessel generalized operators.}

\author[V. Galli]{Vanesa Galli}
\address{Vanesa Galli,
	Departamento de Matem\'atica, Facultad de Ciencias Exactas y
	Naturales,  Universidad Nacional de Mar del Plata, Funes 3350, 7600
	Mar del Plata, Argentina.}
\email{vggalli@mdp.edu.ar}

\author[S. Molina]{Sandra Molina}
\address{Sandra Molina, Departamento de Matem\'atica, Facultad de Ciencias Exactas y
	Naturales,  Universidad Nacional de Mar del Plata, Funes 3350, 7600
	Mar del Plata, Argentina}
\email{smolina@mdp.edu.ar }

\author[A. Quintero]{Alejandro Quintero}
\address{Alejandro Quintero,
	Departamento de Matem\'atica, Facultad de Ciencias Exactas y
	Naturales,  Universidad Nacional de Mar del Plata, Funes 3350, 7600
	Mar del Plata, Argentina.}
\email{aquinter@mdp.edu.ar}

\subjclass[2010]{ Primary 46F12, 47F05, 42A38, 44A05}
\keywords{Liouville theorem; Bessel Operator; Hankel transform.}

\date{}

\dedicatory{}

\begin{abstract}In this paper we establish a Liouville theorem in $\mathcal{H'}_{\mu}$ for a wider class of operators in $\Rn$ that generalizes the $n$-dimensional Bessel operator. We will present two different proofs, based in two representation theorems for certain distributions  "supported in zero". 
\end{abstract}
\maketitle

\section{Introduction}

Liouville type theorems have been studied in many works under different contexts. In analytic theory, Liouville theorems stated that a bounded entire function reduces to a constant. A  first version of Liouville theorem in distributional theory is due to L. Schwartz \cite{Schwartz2}, and assert that any bounded harmonic function in $\R^{n}$ is a constant.

Currently, this result has been generalized in many directions. A well known generalization states that:

\medskip

\textit{Let $L=\underset{|\alpha|\leq m}{\sum}a_{\alpha}D^{\alpha}$ be a linear differential operator with constant coefficients such that $L=\underset{|\alpha|\leq m}{\sum}a_{\alpha}(2\pi i\xi)^{\alpha}\neq 0$ for all $\xi\in\R^{n}-\{0\}$. If a tempered distribution $u$, solves $Lu=0$, then $u$ is a polynomial function. In particular, if $u$ is bounded then it reduces to a constant. }

%\medskip

%There are other generalizations for ultradistributions,  functions defined on Riemannian manifold,  sub-harmonic functions and others. Among the latest well known generalizations, we can highlight those referred to the solutions of non-local fractional Laplacian.

\medskip

In this work, we established a Liouville type theorem for a large class of operators in $\Rn$, that are lineal combinations of operators 

\begin{equation}\label{OpS}
S^{k}=S_{\mu_{1}}^{k_{1}}\circ\ldots\circ S_{\mu_{n}}^{k_{n}}
\end{equation}

\noindent where $k$ is a multi-index, $k=(k_{1}, \ldots, k_{n})$, $\mu_{i}\in\R$, $\mu_{i}\geq -1/2$ and 

\begin{equation}\label{Bessel}
S_{\mu_{i}}=\frac{\partial^{2}}{\partial x_{i}^{2}}-\frac{4\mu_{i}-1}{4x_{i}^2}.
\end{equation}

The operators given by linear combination of (\ref{OpS})  contain as a  particular case the $n$-dimensional operator defined in \cite{Molina2} and given by:

\begin{equation}\label{nB}
S_{\mu}=\Delta-\sum_{i=1}^{n}\frac{4\mu_{i}^{2}-1}{4x_{i}^{2}}
\end{equation}

\noindent where $\mu=(\mu_{1},\ldots,\mu_{n})$ and $S_{\mu}$ is a $n$-dimensional version of the well know Bessel operator

\begin{equation}\label{1B}
S_{\alpha}=\frac{d^{2}}{d x^{2}}-\frac{4\alpha^{2}-1}{4x^2}.
\end{equation}

\noindent This operators were introduced in relation to the Hankel transform given by 

\begin{equation}\label{1Hankel}
h_{\alpha}f(y)=\int_{0}^{\infty}f(x)\sqrt{xy}J_{\alpha}(xy)dx
\end{equation}
\noindent with $\alpha\geq -1/2$, for 1-dimensional case and the $n$-dimensional case

\begin{equation}\label{nHZ}%Hankel n-dimensional Zemanian
(h_{\mu}\phi)(y)=\int_{\Rn}\phi(x_{1},\ldots,x_{n})\prod_{i=1}^{n}\{\sqrt{x_{i}y_{i}}J_{\mu_{i}}(x_{i}y_{i})\}\;dx_{1}\ldots dx_{n}
\end{equation}

\noindent with $\mu=(\mu_{1},\ldots,\mu_{n})$, $\mu_{i}\geq -1/2$, $i=1,\ldots,n$. And $J_{\nu}$ represents the Bessel functions of the first kind and order $\nu$.

\medskip

Bessel operators  (\ref{nB}) and (\ref{1B}) and Hankel Transforms  (\ref{1Hankel}) and (\ref{nHZ})  were studied on Zemanian spaces $\mathcal{H}_{\mu}$ and $\mathcal{H'}_{\mu}$ in \cite{Molina2}, \cite{Molina3} and \cite{Zemanian2}.

\medskip

The space $\mathcal{H}_\mu$ is a space  of functions $\phi\in C^{\infty}(\Rn)$ such that for all $m\in\N_{0}, k\in\N_{0}^{n}$ verifies

\begin{equation}\label{S1}
\gamma_{m,k}^{\mu}(\phi)=\sup_{x\in\Rn}|(1+\|x\|^2)^{m}T^{k}\{x^{-\mu-1/2}\phi(x)\}|<\infty
\end{equation}

\noindent where $-\mu-1/2=(-\mu_1-1/2,\ldots,-\mu_n-1/2)$ and  the operators $T^{k}$ are given by

$$T^{k}=T_{n}^{k_n}\circ T_{n-1}^{k_{n-1}}\circ\ldots\circ T_{1}^{k_1},$$

\noindent where $T_{i}=x_{i}^{-1}\frac{\partial}{\partial x_{i}}$. Thus $\mathcal{H}_{\mu}$ is Fr\`{e}chet space. The dual space of $\mathcal{H}_\mu$ is denoted by $\mathcal{H'}_\mu$.

\medskip

In \cite{Molina2} the authors proved that $S_{\mu_{i}}$ is continuous from $\mathcal{H}_{\mu}$ into itself for all $i=1,\ldots,n$  and self-adjoints lineal mappings.   Which also implies that the operator $S^{k}=S_{\mu_{n}}^{k_{n}}\ldots S_{\mu_{1}}^{k_{1}}$ is continuous from $\mathcal{H}_{\mu}$ into itself. Then, since they are self-adjoints  the generalized operators  can be extended to $\mathcal{H'}_{\mu}$ by
		
\begin{equation}
(S_{\mu_{i}}f, \phi)=(f, S_{\mu_{i}}\phi)\quad\text{and} \quad
(S^{k}f, \phi)=(f, S^{k}\phi), \quad f\in\mathcal{H'}_{\mu}\quad \phi\in \mathcal{H}_{\mu}.\\
\end{equation}

\medskip

\noindent The generalized Hankel transformation $h_{\mu}f$ of $f\in \mathcal{H'}_\mu$ is defined   by

\begin{equation*}
(h_{\mu}f, \phi)=(f, h_{\mu}\phi),\qquad f\in\mathcal{H'}_{\mu}, \qquad \phi\in \mathcal{H}_\mu.
\end{equation*}

\noindent for $\mu\in[-1/2,\infty)^{n}$.
\noindent Then $h_{\mu}$ is an automorphism onto $\mathcal{H}_{\mu}$ and $\mathcal{H'}_{\mu}$ and $h_{\mu}=(h_{\mu})^{-1}$.  

\medskip

The Hankel transform and Bessel operator are related by 
\begin{equation*}
\nH_{\mu}(S_{\mu})=-\|y^{2}\|\nH_{\mu},
\end{equation*}

\noindent in $\mathcal{H}_\mu$ and $\mathcal{H'}_\mu$. 

\medskip

Now we shall describe the main result of this work.

\begin{theorem}\label{GeneralLiouville}
Let  $P[x]$ be a polynomial in $n$-variables such that  $\underset{|\alpha|\leq N}{\sum} a_{\alpha}x^{\alpha}\neq0$ for all  $x\in\R^{n}-\{0\}$ and all its coefficients have the same sign. Let $L$ be the operator
$L=\underset{|\alpha|\leq N}{\sum}(-1)^{|\alpha|}a_{\alpha}S^{\alpha}$.
If $f\in\mathcal{H'}_{\mu}$ and 
\begin{equation}\label{GL}
Lf=0
\end{equation} 
then there exist a polynomial in $n$-variables $Q$ such that $f(x)=x^{\mu+1/2}Q[x_{1}^{2},\ldots, x_{n}^{2}]$.
\end{theorem}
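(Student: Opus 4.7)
\emph{Plan.} My approach is to conjugate the equation by the Hankel transform, turning $Lf=0$ into a multiplication equation, and then invoke a representation theorem for distributions in $\mathcal{H}'_\mu$ ``concentrated at the origin''. Iterating the intertwining relation $h_\mu S_{\mu_i}=-y_i^2 h_\mu$ on $\mathcal{H}'_\mu$ gives $h_\mu(S^\alpha f)=(-1)^{|\alpha|}y^{2\alpha}h_\mu f$, so
\[
 h_\mu(Lf)=\sum_{|\alpha|\leq N}(-1)^{|\alpha|}a_\alpha\cdot(-1)^{|\alpha|}y^{2\alpha}\,h_\mu f = P[y_1^2,\ldots,y_n^2]\,h_\mu f.
\]
Setting $g:=h_\mu f\in\mathcal{H}'_\mu$ and $\tilde P(y):=P[y_1^2,\ldots,y_n^2]$, the equation $Lf=0$ becomes simply $\tilde P\cdot g=0$ in $\mathcal{H}'_\mu$.

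Next I exploit the sign hypothesis. Since the coefficients of $P$ share a common sign and $P$ is non-vanishing on $\R^n\setminus\{0\}$, the polynomial $\tilde P$ has constant sign on $\R^n$ and vanishes only at the origin; in particular it is bounded away from zero outside any neighbourhood of $0$. Morally, $g$ must be ``concentrated at the origin'' of $\Rn$ in the Zemanian sense, and I would invoke the representation theorem announced in the abstract to obtain that every $g\in\mathcal{H}'_\mu$ with $\tilde P\cdot g=0$ lies in the finite-dimensional span of the distributions $h_\mu(x^{\mu+1/2}x^{2\beta})$, for a finite family of multi-indices $\beta$ bounded by the degree of $P$. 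That this is the correct candidate span is suggested by the explicit calculation
\[
 S_{\mu_i}\bigl(x_i^{\mu_i+1/2+2k}\bigr)=4k(k+\mu_i)\,x_i^{\mu_i+1/2+2(k-1)},
\]
which shows that $x^{\mu+1/2}x^{2\beta}\in\mathcal{H}'_\mu$ is killed by a suitable power of the $S_{\mu_i}$, so its Hankel transform is in the kernel of the corresponding power of $y^{2\alpha}$, and hence of $\tilde P$ once the order of vanishing of $P$ at $0$ is sufficient.

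Granted this representation, the conclusion follows by applying the inversion $h_\mu=h_\mu^{-1}$: writing $g=\sum_\beta c_\beta\,h_\mu(x^{\mu+1/2+2\beta})$ we obtain
\[
 f=h_\mu g=\sum_\beta c_\beta\,x^{\mu+1/2+2\beta}=x^{\mu+1/2}\sum_\beta c_\beta\,x^{2\beta}=x^{\mu+1/2}\,Q[x_1^2,\ldots,x_n^2].
\]
The main obstacle is clearly the representation theorem itself. Classically, a tempered distribution annihilated by a polynomial whose only zero is the origin is supported at $0$ and hence a finite combination of derivatives of $\delta_0$, but adapting this to $\mathcal{H}'_\mu$ requires both a workable notion of ``support at the origin'' compatible with the delicate boundary behaviour encoded in the seminorms $\gamma_{m,k}^\mu$, and a division lemma expressing a generic $\phi\in\mathcal{H}_\mu$, modulo a controlled remainder, in the form $\tilde P\,\psi$ with $\psi\in\mathcal{H}_\mu$. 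The common-sign hypothesis on the coefficients of $P$ is precisely what keeps $1/\tilde P$ well-behaved on $\Rn$ away from the origin and allows this division to be carried out cleanly; the two proofs advertised in the abstract presumably correspond to two complementary ways of establishing such a representation.
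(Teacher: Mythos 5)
Your reduction is exactly the one the paper uses: applying $h_\mu$ and iterating $h_\mu S_{\mu_i}=-y_i^2 h_\mu$ on $\mathcal{H}'_\mu$ turns $Lf=0$ into $P[y_1^2,\ldots,y_n^2]\,h_\mu f=0$, and the final inversion step recovering $f=x^{\mu+1/2}Q[x_1^2,\ldots,x_n^2]$ from a representation of $h_\mu f$ by deltas at the origin is also the paper's (via $h_\mu(S^k\delta_\mu)=(-1)^{|k|}y^{2k}y^{\mu+1/2}$, equivalently $h_\mu(T^k\delta_\mu)=C_k^\mu y^{\mu+2k+1/2}$). But the two steps you explicitly defer are precisely where the content of the theorem lies, so the proposal has a genuine gap rather than being a complete argument. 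First, to pass from $P[y_1^2,\ldots,y_n^2]\,g=0$ to ``$(g,\phi)=0$ for every $\phi\in\mathcal{H}_\mu$ supported in $\{\|x\|\ge a\}$'' one must prove the division lemma you only name: that $P[y_1^2,\ldots,y_n^2]^{-1}\psi(y_1^2,\ldots,y_n^2)$ lies in the multiplier space $\mathcal{O}$ for a cutoff $\psi$ vanishing near the origin. This is where the common-sign hypothesis is actually consumed (it yields a positive lower bound for $P[y_1^2,\ldots,y_n^2]$ on $\{\|y\|\ge a\}$, via the constant term when $a_0\ne 0$ and via the minimum of $P$ on the unit sphere when $a_0=0$), and it requires controlling all iterated $T^k$-derivatives of the quotient, not merely its boundedness. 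Second, and more seriously, the structure theorem itself --- that $T\in\mathcal{H}'_\mu$ vanishing on all test functions supported away from the origin is a finite combination of the $T^k\delta_\mu$ (equivalently of the $S^k\delta_\mu$) --- is the main technical result of the paper and is not a formal consequence of the classical statement for $\mathcal{S}'$: it rests on the Taylor expansion of $x^{-\mu-1/2}\phi(x)$ in even powers with a remainder controlled under the operators $T^k$, together with a finite-order continuity estimate for $g$. You correctly identify both obstacles, but supply neither.

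A smaller inaccuracy: the family of multi-indices $\beta$ in the representation is not bounded by the degree of $P$; it is bounded by the order of the distribution $h_\mu f$ (the $N$ in the estimate $|(g,\phi)|\le C\rho_N^\mu(\phi)$). Already for $P[x]=x_1+\cdots+x_n$, i.e. $L=S_\mu$, the equation admits solutions $x^{\mu+1/2}Q[x_1^2,\ldots,x_n^2]$ with $Q$ of arbitrarily large degree, so no bound in terms of $\deg P$ can hold. This does not affect the statement being proved, which only asserts that $Q$ is some polynomial, but it shows that your heuristic for identifying the kernel of multiplication by $P[y_1^2,\ldots,y_n^2]$ (monomials individually annihilated by powers of the $S_{\mu_i}$) does not by itself characterize that kernel; the representation theorem is needed in full.
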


\begin{corollary}
If $f$ is a classical solution of (\ref{GL}) of slow growth then there exist a polynomial in $n$-variables $Q$ such that $f(x)=x^{\mu+1/2}Q[x_{1}^{2},\ldots, x_{n}^{2}]$.. In particular, if $f$ is bounded then $f$ is a constant.
\end{corollary}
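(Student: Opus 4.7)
The plan is to convert $Lf = 0$ into a multiplication equation via the Hankel transform $h_\mu$, and then invoke the structure theorem classifying the $\mathcal{H'}_\mu$-distributions annihilated by multiplication by $P(y^2)$---the representation theorem for distributions ``supported in zero'' announced in the abstract. Applying $h_\mu$ to both sides, I would first establish the coordinate-wise relation $h_\mu(S_{\mu_i} g) = -y_i^2\, h_\mu g$ on $\mathcal{H'}_\mu$, which refines the stated $h_\mu S_\mu = -\|y^2\| h_\mu$ and follows by viewing $h_\mu$ in (\ref{nHZ}) as a tensor product of one-dimensional Hankel transforms and applying the one-variable identity in each factor. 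Iteration gives $h_\mu(S^\alpha g) = (-1)^{|\alpha|} y^{2\alpha} h_\mu g$, so with $g := h_\mu f$ the two $(-1)^{|\alpha|}$ factors (one from the definition of $L$, one from $h_\mu S^\alpha$) cancel and the hypothesis collapses to
\[
P(y_1^2,\ldots,y_n^2)\, g \;=\; \sum_{|\alpha|\leq N} a_\alpha\, y^{2\alpha}\, g \;=\; 0 \qquad \text{in } \mathcal{H'}_\mu.
\]

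Because all $a_\alpha$ share a common sign and $P$ does not vanish on $\R^n-\{0\}$, the function $P(y_1^2,\ldots,y_n^2)$ is strictly of that sign throughout $\Rn$ and can degenerate only at the boundary point $y=0$. The representation theorem should then supply a finite basis $\{D_\beta\}$ of $\{g \in \mathcal{H'}_\mu : P(y^2) g = 0\}$, where each $D_\beta$ extracts a specific Taylor coefficient (in the variables $y_i^2$) of $y^{-\mu-1/2}\phi(y)$ at $y=0$, the indices $\beta$ running over the monomials not absorbed by $P$. A small-argument expansion of the kernel $\prod \sqrt{x_iy_i}\, J_{\mu_i}(x_iy_i)$ in (\ref{nHZ}) identifies $h_\mu D_\beta$, up to an explicit constant, with $x^{\mu+1/2} x^{2\beta}$. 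Writing $g = \sum c_\beta D_\beta$ and using that $h_\mu$ is an involution yields $f = h_\mu g = x^{\mu+1/2} Q[x_1^2,\ldots,x_n^2]$ for a polynomial $Q$.

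The main obstacle is the representation theorem itself. In the classical Schwartz framework, ``polynomial non-vanishing outside $0$ forces the distribution to be supported at $0$'' is handled by cutoffs and local division by the polynomial, but neither is routine in $\mathcal{H}_\mu$: test functions encode prescribed structure at $y=0$ that generic cutoffs disturb, and division by $P(y^2)$ must be controlled in the Fr\'echet seminorms $\gamma_{m,k}^\mu$ of (\ref{S1}). The same-sign hypothesis on the $a_\alpha$ is precisely what rules out destructive cancellations among the monomials $a_\alpha y^{2\alpha}$ and, I expect, allows one to identify the quotient $\mathcal{H}_\mu / P(y^2)\mathcal{H}_\mu$ with a finite-dimensional space of Taylor jets in the variables $y_i^2$ at $y=0$, on which $g$ acts as a linear functional lying in the finite span of the $D_\beta$.
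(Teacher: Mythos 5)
The statement you are asked to prove is the Corollary, and its content is precisely the passage from Theorem \ref{GeneralLiouville} (which the paper proves in Sections 4 and 5) to \emph{classical} solutions: you must (i) show that a classical solution $f$ of slow growth defines a regular element $T_f\in\mathcal{H'}_{\mu}$ via $(T_f,\phi)=\int_{\Rn} f\phi$ and that $LT_f=0$ holds in $\mathcal{H'}_{\mu}$ --- this requires integrating $\int f\,S^{\alpha}\phi$ by parts and checking that the boundary contributions at $x_i\to 0^{+}$ and at infinity vanish, using that $\phi\in\mathcal{H}_{\mu}$ behaves like $x^{\mu+1/2}$ near the origin and is of rapid descent at infinity --- and then (ii) deduce from the conclusion $f(x)=x^{\mu+1/2}Q[x_1^2,\ldots,x_n^2]$ that boundedness of $f$ forces $Q$, and hence $f$, to reduce to a constant. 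Your proposal addresses neither point: it never mentions slow growth, classical solutions, or boundedness, and instead takes $f\in\mathcal{H'}_{\mu}$ as given. It is a sketch of the proof of Theorem \ref{GeneralLiouville} itself, not of the Corollary.

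Read as a sketch of that theorem, your route does coincide with the paper's: the Hankel transform turns $Lf=0$ into $P[y_1^2,\ldots,y_n^2]\,h_{\mu}f=0$; the same-sign and non-vanishing hypotheses allow multiplication by $P^{-1}\psi\in\mathcal{O}$ (the paper's Lemma 4.3), showing that $h_{\mu}f$ annihilates every test function supported in $\{\|x\|\ge a\}$; the representation theorem then gives $h_{\mu}f=\sum_{|k|\le N} c_k T^{k}\delta_{\mu}$, and inverting via (\ref{HankelT^{k}}) yields the claimed form. But you explicitly defer the representation theorem as ``the main obstacle,'' and that theorem (Theorems \ref{Liouville} and \ref{Estructura2}, resting on the Taylor expansion Lemma \ref{Taylor} and the seminorm equivalence of Appendix A) is the entire technical content of the paper; describing what it ought to say is not a proof of it. So even taken as an argument for the Theorem rather than for the Corollary, the proposal has a genuine hole at its centre, in addition to omitting the two steps that the Corollary actually requires.
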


\begin{remark}
	The cases $\mu=(\mu_{1},\ldots,\mu_{n})=(1/2,\ldots,1/2)$ or $(-1/2,\ldots,-1/2)$ produce in (\ref{nB}) the Laplacian operator in $\Rn$. 
\end{remark}

This paper is organized as follows. In Section 2, we present some notational conventions that will allow us to simplify the
presentation of our results. In Section 3 we propose a characterization of a certain family of functions on the  multiplier space $\mathcal{O}$ of the $n$-dimensional space $\mathcal{H}_{\mu}$ that extends the result proved by Zemanian in \cite{Zemanian2}. In Section 4 and 5 we give two different  proofs of Theorem \ref{GeneralLiouville}.

Throughout this paper, C always will denote a suitable positive constant that may change in each occurrence.

\section{Preliminaries and notations}
In this section we summarize without proof the relevant material on Hankel transforms and the Zemanian spaces studied  in \cite{Molina1}, \cite{Molina2} and \cite{Molina3}.

We now present some notational conventions that will allow us to simplify the presentation of our results. We denote by $x=(x_1,\ldots,x_n)$ and $y=(y_1,\ldots,y_n)$ elements of $\Rn$ or $\R^{n}$. Let $\N$ be the set $\{1,2,3,\ldots\}$ and $\N_{0}=\N\cup\{0\}$, $\|x\|=(x_{1}^{2}+\ldots + x_{n}^{2})^{\frac{1}{2}}$. The notations $x<y$ and $x\leq y$ mean, respectively,  $x_i<y_i$ and  $x_i\leq y_i$ for $i=1,\ldots,n$. Moreover,  $x=a$ for $x\in\R^{n}$, $a\in\R$ means $x_1=x_2=\ldots=x_n=a$, $x^{m}=x_{1}^{m_1}\ldots x_{n}^{m_n}$ and $e_j$ for $j=1,\ldots,n$, denotes the members of the canonical basis of $\R^{n}$. An element $k=(k_1,\ldots,k_n)\in\N_{0}^{n}=\N_{0}\times\N_{0}\times\ldots\times\N_{0}$ is called multi-index. For $k,m$ multi-index we set $|k|=k_1+\ldots+k_n$ the length of the multi-index. 

Also we will note                                            $$k!=k_1!\ldots k_n!, \qquad \binom{k}{m}=\binom{k_1}{m_1}\ldots  \binom{k_n}{m_n} \qquad \text{for} \: k,m\in\N_{0}^{n}.$$

\begin{remark}
Let $k$ be a multi-index, the following equality is valid
\begin{equation}\label{Leibniz}
  T^{k}\{\theta.\varphi\}=\sum_{j=0}^{k}\binom{k}{j}T^{k-j}\theta. T^{j}\varphi,
\end{equation}

\noindent where "$.$" denote the usual product of functions, $\binom{k}{j}$ and $\sum_{j=0}^{k}$ must be interpreted as in the previous section for $j=0=(0,\ldots,0)$.
\end{remark}

\begin{remark}
If $e_{i}$ is an element of the canonical base of $\R^{n}$, since
$S^{e_{i}}=S_{\mu_{n}}^{0}\ldots\circ S_{\mu_{i}}^{1}\ldots S_{\mu_{1}}^{0}=S_{\mu_{i}}$, then
$$\sum_{i=1}^{n}S^{e_{i}}=\sum_{i=1}^{n}S_{\mu_{i}}=S_{\mu}.$$
\end{remark}

In \cite{Betancor1} was defined  the generalized function $\delta_{\alpha}$, as 

\begin{eqnarray}\label{A}
(\delta_{\alpha},\phi)=C_{\alpha}\lim\limits_{x\to 0^{+}}x^{-\alpha-1/2}\phi(x),
\end{eqnarray}

\noindent where $C_{\alpha}=2^{\alpha}\Gamma(\alpha+1)$. The distribution given by (\ref{A}) can be extended in the same way to the $n$-dimensional case. Moreover we can consider the following distribution 

\begin{equation}\label{T^{k}delta}
(T^{k}\delta_{\mu}, \phi)=C_{\mu} \lim\limits_{\underset{x_{i}>0}{x\to 0}}
T^{k}\{x^{-\mu-1/2}\phi(x)\}
\end{equation}

\noindent where $k$ is a multi-index,  $\mu\in\R^{n}$  and  $C_{\mu}$ is a constant depending on $\mu$ given by $C_{\mu}=\prod_{i=1}^{n} 2^{\mu_{i}}\Gamma(\mu_{i}+1)$. The generalized function (\ref{T^{k}delta}) is well defined as it can be seen in the proof of Lemma \ref{Taylor}. 

Let $\phi\in\mathcal{H}_{\mu}$, since

$$|(T^{k}\delta_{\mu}, \phi)|= |C_{\mu}\lim\limits_{\underset{x_{i}>0}{x\to 0}}
T^{k}\{x^{-\mu-1/2}\phi(x)\}|\leq
C_{\mu}\sup_{x\in\Rn}|T^{k}\{x^{-\mu-1/2}\phi(x)\}|= C_{\mu}\gamma_{0,k}^{\mu}(\phi),$$

\noindent so $T^{k}\delta_{\mu}$ lies in $\mathcal{H'}_{\mu}$. Moreover,
\begin{equation}\label{HankelT^{k}}
h_{\mu}T^{k}\delta_{\mu}=C_{k}^{\mu}\:t^{\mu+2k+1/2}\quad\text{in} \quad \mathcal{H'}_{\mu},
\end{equation}

\noindent where $C_{k}^{\mu}=(-1)^{|k|}\frac{C_{\mu}}{C_{\mu+k}}$. Indeed, since the well known formula $\frac{d}{dz}(z^{-\alpha}J_{\alpha})=-z^{-\alpha}J_{\alpha+1}$ is valid for $\alpha\neq -1,-2,\ldots$,  if we consider $k=e_{j}$, then 

\begin{align*}
&(h_{\mu}T_{j}\delta_{\mu}, \phi)=
(T_{j}\delta_{\mu}, h_{\mu}\phi)=
C_{\mu}\lim\limits_{\underset{x_{i}>0}{x\to 0}}
T_{j}\{x^{-\mu-1/2}h_{\mu}\phi(x)\}=\\
&=C_{\mu}\lim\limits_{\underset{x_{i}>0}{x\to 0}}
x_{j}^{-1}\partial/\partial x_{j}\left\{x^{-\mu-1/2}
\int_{\Rn}\phi(t_{1},\ldots,t_{n})
\prod_{i=1}^{n}\{\sqrt{x_{i}t_{i}}J_{\mu_{i}}(x_{i}t_{i})\}
dt_{1}\ldots dt_{n}\right\}\\
&=C_{\mu}\lim\limits_{\underset{x_{i}>0}{x\to 0}}
x_{j}^{-1}\partial/\partial x_{j}\left\{
\int_{\Rn}t^{\mu+1/2}\phi(t_{1},\ldots,t_{n})
\prod_{i=1}^{n}\{(x_{i}t_{i})^{-\mu_{i}}J_{\mu_{i}}(x_{i}t_{i})\}
dt_{1}\ldots dt_{n}\right\}\\
&=C_{\mu}\lim\limits_{\underset{x_{i}>0}{x\to 0}}
x_{j}^{-1}\left\{
\int_{\Rn}t^{\mu+1/2}\phi(t_{1},\ldots,t_{n})\;
\partial/\partial x_{j} \left\{
\prod_{i=1}^{n}
\{(x_{i}t_{i})^{-\mu_{i}}J_{\mu_{i}}(x_{i}t_{i})\}\right\}
dt_{1}\ldots dt_{n}\right\}\\
%&=C_{\mu}\lim\limits_{\underset{x_{i}>0}{x\to 0}}
%x_{j}^{-1}\left\{
%\int_{\Rn}t^{\mu+1/2}\phi(t_{1},\ldots,t_{n})\;
%\left(-t_{j}(x_{j}t_{j})^{-\mu_{j}}J_{\mu_{j}+1}(x_{j}t_{j})\right)
%\prod_{\underset{i=1}{i\neq j}}^{n}
%\{(x_{i}t_{i})^{-\mu_{i}}J_{\mu_{i}}(x_{i}t_{i})\}
%dt_{1}\ldots dt_{n}\right\}\\
&=-C_{\mu}\lim\limits_{\underset{x_{i}>0}{x\to 0}}
\int_{\Rn}t^{\mu+2e_{j}+1/2}\phi(t_{1},\ldots,t_{n})\;
\left[(x_{j}t_{j})^{-(\mu_{j}+1)}J_{\mu_{j}+1}(x_{j}t_{j})\right]
\prod_{\underset{i\neq j}{i=1}}^{n}
\{(x_{i}t_{i})^{-\mu_{i}}J_{\mu_{i}}(x_{i}t_{i})\}
dt_{1}\ldots dt_{n}\\
&=-C_{\mu}\{C_{\mu_{j}+1}\prod_{\underset{i\neq j}{i=1}}^{n}C_{\mu_{i}}\}^{-1}
\int_{\Rn}t^{\mu+2e_{j}+1/2}\phi(t_{1},\ldots,t_{n})\;
dt_{1}\ldots dt_{n}\\
&=
\left(-\frac{C_{\mu_{j}}}{C_{\mu_{j}+1}}t^{\mu+2e_{j}+1/2},\phi\right)\\
\end{align*}

\noindent Therefore the assertion is true for $k=e_{j}$. The general case follows in a similar way. Indeed, let $r\in\N_{0}$ and let us observe that \begin{align}\label{eq1}
&T_{j}^{r}
\left\{
t^{\mu+1/2}
\prod_{i=1}^{n}\{(x_{i}t_{i})^{-\mu}J_{\mu_{i}}(x_{i}t_{i})\}
\right\}=\nonumber\\
&=(x_{j}^{-1}\partial/\partial x_{j})^{r}
\left\{
t^{\mu+1/2}
\prod_{i=1}^{n}\{(x_{i}t_{i})^{-\mu}J_{\mu_{i}}(x_{i}t_{i})\}
\right\}\nonumber\\
&=(-1)^{r}t^{\mu+2r\,e_{j}+1/2}
(x_{j}t_{j})^{-(\mu_{j}+r)}J_{\mu_{j}+r}(x_{j}t_{j})
\prod_{\underset{i\neq j}{i=1}}^{n}
\{(x_{i}t_{i})^{-\mu_{i}}J_{\mu_{i}}(x_{i}t_{i})\},
\end{align} 
\noindent then (\ref{eq1}) yields 
\begin{align*}
&(h_{\mu}T_{j}^{r}\delta_{\mu},\phi)=
(T_{j}^{r}\delta_{\mu},h_{\mu}\phi)=
C_{\mu}\lim\limits_{\underset{x_{i}>0}{x\to 0}} T_{j}^{r}\{x^{-\mu-1/2}h_{\mu}\phi(x)\}=\\
&=C_{\mu}\lim\limits_{\underset{x_{i}>0}{x\to 0}}
(x_{j}^{-1}\partial/\partial x_{j})^{r}
\left\{
x^{-\mu-1/2}\int_{\Rn}\phi(t_{1},\ldots,t_{n})
\prod_{i=1}^{n}
\{\sqrt{x_{i}t_{i}}J_{\mu_{i}}(x_{i}t_{i})\}
dt_{1}\ldots dt_{n}
\right\}\\
&=C_{\mu}\lim\limits_{\underset{x_{i}>0}{x\to 0}}
(x_{j}^{-1}\partial/\partial x_{j})^{r}
\left\{
\int_{\Rn}t^{\mu+1/2}\phi(t_{1},\ldots,t_{n})
\prod_{i=1}^{n}
\{(x_{i}t_{i})^{-\mu_{i}}J_{\mu_{i}}(x_{i}t_{i})\}
dt_{1}\ldots dt_{n}
\right\}\\
&=(-1)^{r}C_{\mu}\lim\limits_{\underset{x_{i}>0}{x\to 0}}
\int_{\Rn}
t^{\mu+2r\,e_{j}+1/2}\phi(t_{1},\ldots,t_{n})
(x_{j}t_{j})^{-(\mu_{j}+r)}J_{\mu_{j}+r}(x_{j}t_{j})
\prod_{\underset{i\neq j}{i=1}}^{n}
\{(x_{i}t_{i})^{-\mu_{i}}J_{\mu_{i}}(x_{i}t_{i})\}
dt_{1}\ldots dt_{n}\\
&=(-1)^{r}C_{\mu}\{C_{\mu_{j}+r}\prod_{\underset{i\neq j}{i=1}}^{n}C_{\mu_{i}}\}^{-1}
\int_{\Rn}t^{\mu+2re_{j}+1/2}\phi(t_{1},\ldots,t_{n})\;
dt_{1}\ldots dt_{n}\\
&=
\left((-1)^{r}\frac{C_{\mu_{j}}}{C_{\mu_{j}+r}}t^{\mu+2re_{j}+1/2},\phi\right).\\
\end{align*}

\noindent For the general case, if we compute for  $j\neq k\in\{1,\ldots,n\}$ and $r,m\in\N_{0}$ we obtain that

\begin{align*}
&(h_{\mu}T_{j}^{r}T_{k}^{m}\delta_{\mu},\phi)=
(T_{j}^{r}T_{k}^{m}\delta_{\mu},h_{\mu}\phi)=
((-1)^{r+m}\frac{C_{\mu_{j}}C_{\mu_{k}}}{C_{\mu_{j}+r}C_{\mu_{k}+m}}
t^{\mu+2re_{j}+2me_{k}+1/2},\phi)
\end{align*}
\noindent and the result follows.

\section{Some results about Taylor's expansions and a  special family of multipliers in $\mathcal{H}_{\mu}$}

In this section we extend the characterization obtained by Zemanian in \cite{Zemanian2} related to Taylor's expansions of functions in $\mathcal{H}_{\mu}$. Moreover we give a result which improve the Lemma 3.2 in \cite{Molina1}.

\begin{lemma}\label{Taylor}
Let $\mu\in\R^{n}$. Then $\phi$ is a member of $\mathcal{H}_{\mu}$ if and only if it satisfies the following three conditions:
	
	\begin{enumerate}
		\item[(i)] $\phi(x)$ is a smooth complex valued function on $\Rn$.
		\item[(ii)] For each $r\in\N_{0}$
		\begin{equation}\label{taylor1}
		x^{-\mu-1/2}\phi(x)=a_{0}+
		\underset{|k_{1}|=1}{\sum}a_{2k_{1}}x^{2k_{1}}+
		\underset{|k_{2}|=2}{\sum}a_{2k_{2}}x^{2k_{2}}+
		\ldots+
		\underset{|k_{r}|=r}{\sum}a_{2k_{r}}x^{2k_{r}}+
		R_{2r}(x),
		\end{equation}
		
		\noindent where 
		
		\begin{equation}\label{taylor2}
		a_{2k_{r}}=\frac{1}{2^{r}k_{r}!}
		\lim_{\underset{x_{i}>0}{x\to 0}}T^{k_{r}}\{x^{-\mu-1/2}\phi(x)\},
		\end{equation}
		
		\noindent and the remainder term $R_{2r}(x)$ satisfies 		
		\begin{equation}\label{taylor3}
		T^{k}R_{2r}(x)=o(1) \qquad \underset{x_{i}>0}{x\to 0}
		\end{equation}
		\noindent for $k$ multi-index such that $|k|=r$.
		
		\item[(iii)] For each multi-index $k_{r}$, $D^{k_{r}}\phi(x)$ is of rapid descent as $|x|\to\infty$.  
	\end{enumerate}
\end{lemma}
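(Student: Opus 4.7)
The plan is to reduce the statement to a standard multivariable Taylor expansion via the change of variables $y_{i}=x_{i}^{2}$. Under this substitution $T_{i}=x_{i}^{-1}\partial/\partial x_{i}$ becomes $2\,\partial/\partial y_{i}$, so $T^{k}$ corresponds to $2^{|k|}\partial_{y}^{k}$ and $x^{2k_{r}}$ to $y^{k_{r}}$. Writing $\psi(y):=x^{-\mu-1/2}\phi(x)$ with $x_{i}=\sqrt{y_{i}}$, the seminorm $\gamma_{m,k}^{\mu}(\phi)$ becomes a weighted sup-norm of $\partial_{y}^{k}\psi$ on $\Rn$ in the $y$ variables. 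In this picture, the expansion (\ref{taylor1}) is the ordinary Taylor polynomial of $\psi$ at $y=0$; the identity (\ref{taylor2}) reads $a_{2k_{r}}=\partial_{y}^{k_{r}}\psi(0)/k_{r}!$; and the remainder condition (\ref{taylor3}) is the standard statement that $\partial_{y}^{l}R(y)\to 0$ as $y\to 0$ for $|l|\leq r$. A preliminary fact I will use is that a smooth function on $\Rn$ all of whose partial derivatives are bounded extends uniquely to a $C^{\infty}$ function on $[0,\infty)^{n}$, which follows by iterated Lipschitz-continuous extension.

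\medskip

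For the forward direction, assume $\phi\in\mathcal{H}_{\mu}$. Condition (i) is immediate since membership in $\mathcal{H}_{\mu}$ requires $\phi\in C^{\infty}(\Rn)$. For (ii), the finiteness of the seminorms $\gamma_{0,k}^{\mu}(\phi)$ says exactly that every $\partial_{y}^{k}\psi$ is bounded on $\Rn$, so by the preliminary fact $\psi$ extends smoothly to $[0,\infty)^{n}$, and ordinary multivariable Taylor's theorem at $y=0$ yields (\ref{taylor1})--(\ref{taylor3}). For (iii) I write $\phi=x^{\mu+1/2}\psi$ and apply the ordinary Leibniz rule for $D^{k_{r}}$, expressing $D^{k_{r}}\phi$ as a finite sum of terms $x^{\mu+1/2-\alpha}D^{\beta}\psi$ with $\alpha+\beta=k_{r}$; each $D^{\beta}\psi$ is then converted, via the chain-rule identity $\partial_{x_{i}}^{l}=\sum_{j\leq l/2}c_{l,j}\,x_{i}^{l-2j}\,T_{i}^{l-j}$ (derived from $\partial_{x_{i}}=x_{i}T_{i}$), into a polynomial-in-$x$ combination of the $T^{j}\psi$, which are rapidly decreasing by definition of $\mathcal{H}_{\mu}$.

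\medskip

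For the converse, assume (i)--(iii). I must bound $(1+\|x\|^{2})^{m}|T^{k}\{x^{-\mu-1/2}\phi\}(x)|$ uniformly on $\Rn$, which I do in two regions. On $\|x\|\geq 1$, I invert the chain rule: $T_{i}^{j}=\sum_{l=1}^{j}d_{j,l}\,x_{i}^{l-2j}\,\partial_{x_{i}}^{l}$ together with the Leibniz rule (\ref{Leibniz}) applied to the product $x^{-\mu-1/2}\cdot\phi$ expresses $T^{k}\{x^{-\mu-1/2}\phi\}$ as a finite linear combination of terms $x^{\gamma}D^{l}\phi$; the factors $|x^{\gamma}|$ grow at most polynomially on $\|x\|\geq 1$, and the rapid descent of $D^{l}\phi$ from (iii) absorbs any polynomial weight. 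On $\|x\|\leq 1$ I apply (ii) with $r=|k|$: the polynomial part collapses under $T^{k}$ to the single term $T^{k}(a_{2k}x^{2k})=2^{|k|}k!\,a_{2k}$ (every other monomial $x^{2k_{j}}$ with $|k_{j}|\leq|k|$ fails to satisfy $k_{j}\geq k$ componentwise and is annihilated), while the remainder satisfies $T^{k}R_{2|k|}(x)=o(1)$ as $x\to 0$ by (\ref{taylor3}); continuity on the bounded annulus between the two regions is automatic from (i).

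\medskip

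The main obstacle I anticipate is the combinatorial bookkeeping in the chain-rule and Leibniz identities that exchange $T^{k}$ and $D^{l}$ in the presence of the multiplier $x^{-\mu-1/2}$, which is singular at the boundary of $\Rn$. This singularity is precisely why condition (iii) alone, which controls only the behavior as $|x|\to\infty$, cannot determine membership in $\mathcal{H}_{\mu}$: condition (ii) is what supplies the local structure of $x^{-\mu-1/2}\phi$ near $x=0$. Everything else reduces to standard facts, namely multivariable Taylor's theorem with remainder, $C^{\infty}$-extension of functions with bounded partials from an open orthant to its closure, and the change of variables $y_{i}=x_{i}^{2}$.
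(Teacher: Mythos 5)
Your proposal is correct in substance and follows the same overall skeleton as the paper's proof (show that the limits defining the coefficients $a_{2k_{r}}$ exist, identify them via the identity $T^{m}\{x^{2k}\}=2^{r}k!$ when $m=k$ and $0$ otherwise, and prove the converse by splitting $\Rn$ into a region near the origin controlled by (ii) and a region near infinity controlled by (iii)), but the key technical step is carried out by a genuinely different device. The paper establishes the existence of $\lim_{x\to 0}T^{k}\{x^{-\mu-1/2}\phi(x)\}$ by hand: writing $\psi=T^{k}\{x^{-\mu-1/2}\phi\}$, it bounds $\left|\frac{\partial^{n}}{\partial x_{n}\cdots\partial x_{1}}\psi\right|\leq M|x_{1}\cdots x_{n}|$ (using $\partial_{x_{i}}=x_{i}T_{i}$ and the finiteness of the higher seminorms), integrates over boxes $[1,x_{1}]\times\cdots\times[1,x_{n}]$, and controls the boundary terms $\psi(b_{\lambda})$ recursively. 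You instead substitute $y_{i}=x_{i}^{2}$, note that every $\partial_{y}^{k}\psi$ is bounded because $T^{k}=2^{|k|}\partial_{y}^{k}$, and invoke the $C^{\infty}$-extension of a function with bounded derivatives from the convex open orthant to its closure, after which ordinary Taylor's theorem at $y=0$ delivers (ii) in one stroke; this is cleaner and explains conceptually why only even powers appear, at the cost of relying on the (standard but not entirely trivial) extension lemma, which is essentially what the paper's integration argument proves by hand. Two caveats. First, the paper does not prove the forward implication for (iii) (it cites \cite{Molina1}); your Leibniz/chain-rule argument is a reasonable substitute, but the factors $x^{\mu+1/2-\alpha}$ with negative exponents need care on the part of $\{\|x\|\to\infty\}$ where some coordinate stays small. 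Second, in the converse your region $\{\|x\|\geq 1\}$ contains points with some $x_{i}$ near $0$, where the negative powers $x^{\gamma}$ produced by inverting the chain rule are not polynomially bounded; the paper has the same blind spot (its regions $(0,1]^{n}$ and $(1,\infty)^{n}$ do not cover $\Rn$), so this is an imprecision you inherit rather than introduce, but a fully rigorous argument must treat the mixed regions, e.g.\ by applying the local expansion coordinate-wise in the small variables.
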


\begin{proof}
	
	Since $\phi(x)\in\mathcal{H}_{\mu}$ condition $(i)$ is satisfied by definition. For  $k$ a multi-index let us consider the smooth function in $\Rn$ given by
	\begin{equation}\label{taylor4}
	\psi(x)=\psi(x_{1},\ldots,x_{n})=T^{k}\{x^{-\mu-1/2}\phi(x)\}.
	\end{equation}
	Let us see that the coefficients given by  (\ref{taylor2}) are well defined, $i.e.$,
	
	\begin{equation}\label{taylor7}
	\lim\limits_{\underset{x_{i}>0}{(x_{1},\ldots,x_{n})\to (0,\ldots,0)}} 
	\psi(x_{1},\ldots,x_{n})<\infty.
	\end{equation}

	Since \begin{equation}\label{taylor5}
	\left|\frac{\partial^{n}}{\partial x_{n}\ldots\partial x_{1}}\psi(x)\right|
	\leq M|x_{1}\ldots x_{n}|,
	\end{equation} 
	
\noindent if $(a_{1},\ldots,a_{n})$ in $[0,\infty)^{n}$ such that there exist $1\leq j\leq n$ and $a_{j}=0$ then

$$\lim\limits_{\underset{x_{i}>0}{(x_{1},\ldots,x_{n})\to (a_{1},\ldots,a_{n})}} \frac{\partial^{n}}{\partial x_{n}\ldots\partial x_{1}}\psi(x)=0$$

\noindent then $\frac{\partial^{n}}{\partial x_{n}\ldots\partial x_{1}}\psi(x)$ is $C^{\infty}$ in $\Rn$, continuous in $[0,\infty)^{n}$ and consequently integrable in $[0,1]^{n}$.

\noindent Moreover,

\begin{equation}\label{eq10}
\int_{1}^{x_{1}}\ldots \int_{1}^{x_{n}}
\frac{\partial^{n}}{\partial y_{n}\ldots \partial y_{1}}
\psi(y_{1},\ldots,y_{n})
dy_{n}\ldots dy_{1}
=\psi(x_{1},\ldots,x_{n})+\sum_{\lambda}a_{\lambda}\psi(b_{\lambda})
\end{equation}
\noindent with $a_{\lambda}=1$ or $-1$ and $b_{\lambda}=(b_{\lambda_1},\ldots,b_{\lambda_n})$ with $b_{\lambda_i}=x_{i}$ or $b_{\lambda_i}=1$.

\noindent Let us see now 

$$
\lim\limits_{\underset{x_{i}>0}{(x_{1},\ldots,x_{n})\to (0,\ldots,0)}}\psi(b_{\lambda})<\infty.
$$

\noindent First, let us consider $b_{\lambda}$ such that  $b_{\lambda_{j}}=1$ if $j\neq i$ and $b_{\lambda_{i}}=x_{i}$. Since 

$$\left|\frac{\partial}{\partial y_{i}} 
\psi(y_{1},\ldots,y_{n})
\right|\leq M|y_{i}|,$$

$$\lim\limits_{x_{i}\to 0}
\int_{1}^{x_{i}}
\frac{\partial}{\partial y_{i}} 
\psi(1,\ldots,y_{i},\ldots,1)dy_{i}<\infty.
$$

\noindent So, $\lim\limits_{x\to 0}
\psi(1,\ldots,x_{i},\ldots,1)<\infty$.

\noindent Now let us see that $\lim\limits_{x\to 0}
\psi(1,\ldots,x_{i},1,\ldots,1,x_{j}\ldots,1)<\infty$. In fact 

$$
\left|\frac{\partial}{\partial x_{i}\partial x_{j}}
\psi(1,\ldots,1,x_{i},1,\ldots,1,x_{j},\ldots,1)
\right|
\leq M|x_{i}x_{j}|.
$$

Then $\frac{\partial}{\partial x_{i}\partial x_{j}}
\psi(1,\ldots,1,x_{i},1,\ldots,1,x_{j},\ldots,1)$ is integrable in $[0,1]^{2}$ and

\begin{align*}
&\int_{1}^{x_{j}}\int_{1}^{x_{i}}
\frac{\partial}{\partial y_{i}\partial y_{j}}
\psi(1,\ldots,1,y_{i},1,\ldots,1,y_{j},\ldots,1)
dy_{i}dy_{j}=\\
&\psi(1,\ldots,x_{i},\ldots,x_{j},\ldots,1)-
 \psi(1,\ldots,x_{i}   ,\ldots,1  ,\ldots,1)-\\
&\hspace{10em} -
\psi(1,\ldots,1    ,\ldots,x_{j},\ldots,1)+
 \psi(1,\ldots,1).
\end{align*}

\noindent Then, taking limit when $x\to 0$ to both sides of the previous formula we obtain that $\lim\limits_{x\to 0}\psi(1,\ldots,x_{i},\ldots,x_{j},\ldots,1)<\infty.$

\noindent If we continuous this process recursively, in the $(n-1)$ step then we obtain that $\lim\limits_{x\to 0}\psi(b)$ is finite if $b=(1,x_{2},\ldots x_{n})$, or $(x_{1},1,\ldots,x_{n})$, etc. Finally from (\ref{eq10}) we deduce (\ref{taylor7}).

	\smallskip
	
	Now let us make the following observation. If $r,p\in\N_{0}$
	
	\begin{center}
		\begin{equation}
		T_{i}^{r}x_{i}^{2p}=
		\begin{cases}
		\hfil 2^{r}r!                                  &\text{if}\:r=p \\
		\hfil 2^{r} \frac{p!}{(p-r-1)!}x_{i}^{2(p-r)}  &\text{if}\:r<p\\
		\hfil 0                                  &\text{if}\:r>p
		\end{cases}.
		\end{equation}
	\end{center}

	\noindent Let $m$ and $k$ be  multi-index such as $|m|=|k|=r$, then 
	\begin{center}
		\begin{equation} 
		T^{m}\{x^{2k}\}=
		\begin{cases}
		\hfil 2^{r}k!    &\text{if}\:m=k\\
		\hfil 0                       &\text{if}\:m\neq k
		\end{cases}.
		\end{equation}
	\end{center}
	
	\noindent Upon choosing $a_{2k_{r}}$ according to (\ref{taylor2}) and observing that
	
	\begin{align*}
	\lim\limits_{\underset{x_{i}>0}{x\to 0}} 
	T^{k_{r}}R_{2r}(x)
	&=
	\lim\limits_{\underset{x_{i}>0}{x\to 0}} 
	T^{k_{r}}\left\{
	x^{-\mu-1/2}\phi(x)-\sum_{j=0}^{r}
	\sum_{|k_{j}|=j} a_{k_{j}}x^{2k_{j}}\right\}=\\
	&=\lim\limits_{\underset{x_{i}>0}{x\to 0}} 
	T^{k_{r}}\{x^{-\mu-1/2}\phi(x)\}-
	a_{k_{r}}2^{r}k_{r}! =0,
	\end{align*}
	
	\noindent we obtain (\ref{taylor3}). Condition (iii) was already proved in \cite[Lemma 2.1]{Molina1}. Conversely, if conditions (i) and (ii) hold, then
	
	\[
	\sup_{x\in (0,1]^{n}}
	|(1+\|x\|^{2})T^{k}\{x^{-\mu-1/2}\phi(x)\}|<\infty.
	\]  
	
	\noindent From (\ref{Leibniz}) it can be deduce the formula
	$$T^{k}\{x^{-\mu-1/2}\}=
	x^{-\mu-1/2}\left\{
	\sum_{j=0}^{k}b_{k,j}\frac{D^{j}\phi}{x^{2k-j}}
	\right\}$$

	which implies jointly with conditions  (i) and (iii)  that 
	
	\[
	\sup_{x\in (1,\infty)^{n}}
	|(1+\|x\|^{2})T^{k}\{x^{-\mu-1/2}\phi(x)\}|<\infty.
	\] 
	
	Therefore $\gamma_{m,k}^{\mu}(\phi)$ are finite for all $m\in\N_{0}$ and  $k\in\N_{0}^{n}$ which completes the theorem. 
	
\end{proof}

\smallskip

Let $\mathcal{O}$ be the space of functions $\theta\in C^{\infty}(\Rn)$ with the property that for every $k\in\N_{0}^{n}$ there exists $n_{k}\in\Z$ and $C>0$ such that

$$|(1+\|x\|^{2})^{n_{k}}T^{k}\theta|<C \quad \forall x\in\Rn.$$

\bigskip

For the next Lemma, we will consider polynomials of $n$-variables of the form

\begin{equation*}
   P[x]=P[x_{1},\ldots,x_{n}]=
   \sum_{|\alpha|\leq N}a_{\alpha}x^{\alpha}\qquad\text{with}\;a_{\alpha}\in\R.
\end{equation*}

\begin{lemma}
  Let $P[x]$ and $Q[x]$ be polynomials of $n$-variables such that $Q[x]=\underset{|\alpha|\leq N}{\sum} b_{\alpha}x^{\alpha}\neq 0$  for all $x\in[0,\infty)^{n}$  and  all its coefficients have the same sign  then $\frac{P[x_{1}^{2},\ldots,x_{n}^{2}]}{Q[x_{1}^{2},\ldots,x_{n}^{2}]}\in \mathcal{O}$.
\end{lemma}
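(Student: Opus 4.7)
The plan is to change to the variable $y=(x_1^2,\ldots,x_n^2)$. A direct chain-rule computation shows that the operator $T_i=x_i^{-1}\partial/\partial x_i$ acts on any smooth function $F$ of $y$ as $T_i F = 2\,\partial F/\partial y_i$, so the $T_i$'s commute on such functions and one has the identity $T^k[F(x_1^2,\ldots,x_n^2)] = 2^{|k|}(\partial_y^k F)(x_1^2,\ldots,x_n^2)$. Applying this to $\theta(x):=P[y]/Q[y]$ reduces the problem to bounding $\partial_y^k(P/Q)$ on $[0,\infty)^n$, where $P$ and $Q$ are now ordinary polynomials in the $n$ variables $y_1,\ldots,y_n$.

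By an easy induction on $|k|$ via the quotient rule, there is a polynomial $R_k$ in $n$ variables such that $\partial_y^k(P/Q) = R_k/Q^{|k|+1}$. Setting $d_k:=\deg R_k$, the trivial polynomial bound gives $|R_k(y)|\leq C(1+y_1+\cdots+y_n)^{d_k}=C(1+\|x\|^2)^{d_k}$ for $y\in[0,\infty)^n$. Here I am using that the $y_i$ are nonnegative, so $\sum y_i$ is equivalent (up to dimensional constants) to $\|y\|$, and $\sum y_i = \|x\|^2$ exactly.

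For the denominator I invoke the sign hypothesis. Without loss of generality all coefficients $b_\alpha$ of $Q$ are nonnegative (replacing $Q$ by $-Q$ otherwise). Then $Q(y)=\sum b_\alpha y^\alpha \geq b_0$ for every $y\in[0,\infty)^n$, and since $Q(0)=b_0\neq 0$ by hypothesis we conclude $b_0>0$; hence $|Q(y)|\geq b_0$ uniformly on the closed orthant. Combining with the previous step and taking $n_k:=-d_k\in\Z$ yields $(1+\|x\|^2)^{n_k}|T^k\theta(x)|\leq 2^{|k|}C/b_0^{|k|+1}$ for all $x\in\Rn$, which is exactly the defining estimate for $\theta\in\mathcal{O}$.

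The main (minor) obstacle is the quotient-rule induction establishing the form $R_k/Q^{|k|+1}$ with $R_k$ a polynomial; everything else is a one-line observation. The sign hypothesis is used only at the very end, in the neat way that a polynomial with coefficients of one sign that never vanishes on $[0,\infty)^n$ must be bounded below by its constant term, which furnishes the uniform lower bound on $|Q|$ needed to offset the polynomial growth of the numerator.
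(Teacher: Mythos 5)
Your proof is correct, and it rests on the same two estimates as the paper's: the numerator is a polynomial in $x_1^2,\ldots,x_n^2$ and hence bounded by $C(1+\|x\|^2)^{d}$ on the orthant, while the sign hypothesis forces $Q[x_1^2,\ldots,x_n^2]\geq b_0>0$ (the constant term cannot vanish since $Q$ has no zero at the origin), which controls the denominator uniformly. The organization differs in a useful way. The paper proves separately that $P[x_1^2,\ldots,x_n^2]\in\mathcal{O}$ and that $1/Q[x_1^2,\ldots,x_n^2]\in\mathcal{O}$, computing $T^{e_i}$ explicitly in the $x$-variables and then asserting that ``the general case follows in a similar way,'' which implicitly also uses that $\mathcal{O}$ is closed under products (via the Leibniz rule (\ref{Leibniz})). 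You instead substitute $y=(x_1^2,\ldots,x_n^2)$, which converts $T^k$ into $2^{|k|}\partial_y^k$, and then a single quotient-rule induction gives $\partial_y^k(P/Q)=R_k/Q^{|k|+1}$ with $R_k$ a polynomial; this handles the quotient in one stroke and makes the general multi-index case fully explicit rather than sketched. What your route buys is a cleaner and more complete treatment of arbitrary $k$; what the paper's route buys is that the intermediate facts ($\tilde P\in\mathcal{O}$, the lower bound on $Q$, and for the nonconstant-term case a minimum on the sphere) are stated in the form reused in the subsequent lemma about $P^{-1}\psi$. No gap in your argument.
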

%P[x_{1}^{2},\ldots,x_{n}^{2}]
%Q[x_{1}^{2},\ldots,x_{n}^{2}]
\begin{proof}
  Let us show that $P[x_{1}^{2},\ldots,x_{n}^{2}]\in \mathcal{O}$. We want to see that for all $k\in\N_{0}^{n}$ there exists $n_{k}\in\Z$ such that
\begin{equation}\label{O1}
  |(1+\|x\|^{2})^{n_{k}}T^{k}P[x_{1}^{2},\ldots,x_{n}^{2}]|<\infty .
\end{equation}

If $k=e_{i}$,

\begin{equation*}
\begin{aligned}
T^{e_{i}}P[x_{1}^{2},\ldots,x_{n}^{2}]
&= x_{i}^{-1}\frac{\partial}{\partial x_{i}}P[x_{1}^{2},\ldots,x_{n}^{2}]
= x_{i}^{-1}\frac{\partial}{\partial x_{i}}
\left(\sum_{|\zeta|\leq N'}a_{\zeta} 
x_{1}^{2\zeta_{1}}\ldots x_{n}^{2\zeta_{n}}\right)\\
&= x_{i}^{-1}\sum_{|\zeta|\leq N'}
2\zeta_{i}\,a_{\zeta}\,x_{1}^{2\zeta_{1}}\ldots x_{i}^{2\zeta_{i}-1}\ldots x_{n}^{2\zeta_{n}}
= \tilde{P}[x_{1}^{2},\ldots,x_{n}^{2}].\\
\end{aligned}
\end{equation*}

Any polynomial of the form $\underset{{|\beta|\leq N}}{\sum} c_{\beta} \: x_{1}^{2\beta_{1}}\ldots x_{n}^{2\beta_{n}}$ can be bounded in the following way

\begin{equation*}
\left|\sum_{|\beta|\leq N}c_{\beta}\: 
   x_{1}^{2\beta_{1}}\ldots x_{n}^{2\beta_{n}}\right|\leq
 \sum_{|\beta|\leq N} |c_{\beta}|
  |x_{1}^{2\beta_{1}}|\ldots |x_{n}^{2\beta_{n}}|
  <C(1+\|x\|^{2})^{|\gamma|},
 \end{equation*}

\noindent for suitables $C>0$ and a multi-index $\gamma$. So

$$|(1+\|x\|^{2})^{-|\gamma'|}\tilde{P}[x_{1}^{2},\ldots,x_{n}^{2}]|<C,$$
\noindent for some multi-index $\gamma'$. 

Now let us see  that $1/Q[x_{1}^{2},\ldots,x_{n}^{2}]$ is also in  $\mathcal{O}$. Let  $Q[x]=\underset{|\alpha|\leq N}{\sum} b_{\alpha}x_{1}^{\alpha_{1}}\ldots x_{n}^{\alpha_{n}}$ and without loss of generality we assume that $b_{\alpha}\geq 0$, $ \quad\forall\alpha:|\alpha|\leq N$, then 

\begin{align}
T^{e_{i}}(Q[x_{1}^{2},\ldots,x_{n}^{2}])^{-1} 
&= x_{i}^{-1}\frac{\partial}{\partial x_{i}}
   (Q[x_{1}^{2},\ldots,x_{n}^{2}])^{-1} \nonumber \\
&= x_{i}^{-1}(-1) (Q[x_{1}^{2},\ldots,x_{n}^{2}])^{-2}
   \frac{\partial }{\partial x_{i}} Q[x_{1}^{2},\ldots,x_{n}^{2}]\nonumber\\
&= (Q[x_{1}^{2},\ldots,x_{n}^{2}])^{-2}
   \tilde{Q}[x_{1}^{2},\ldots,x_{n}^{2}]\label{O3},
\end{align}

\noindent since $Q[x]$ does not have any zeros in $[0,\infty)^{n}$ then $b_{0}\ne 0$, so

\begin{equation*}
  Q[x_{1}^{2},\ldots,x_{n}^{2}]=b_{0}+\sum_{0<|\alpha|\leq N} b_{\alpha}x_{1}^{2\alpha_{1}}\ldots x_{n}^{2\alpha_{n}}\geq b_{0},
\end{equation*}

\noindent therefore
\begin{equation}\label{O2}
(Q[x_{1}^{2},\ldots,x_{n}^{2}])^{-2}\leq\frac{1}{b_{0}^{2}}<\infty.
\end{equation}

From (\ref{O3}) and (\ref{O2}), it follows (\ref{O1}) for $k=e_{i}$. The general case follows in a similar way.
\end{proof}

\section{Proofs of Liouville type theorem  in $\mathcal{H'}_{\mu}$}

The following is a representation theorem for distibutions  "supported in zero" in $\mathcal{H'}_{\mu}$.

\begin{theorem}\label{Liouville}
	Let $T\in\mathcal{H'}_{\mu}$ satisfying $(T, \phi)=0$ for all $\phi\in\mathcal{H}_{\mu}$ with $\supp(\phi)\subset \{x\in\Rn:\|x\|\geq a\}$ for some $a\in\R$,  $a>0$. Then there exist $N\in\N_{0}$ and scalars $c_{k}$, $|k|\leq N$ such that
		$$T=\sum_{|k|\leq N} c_{k} S^{k}\delta_{\mu},$$
	where $\delta_{\mu}$ is given by (\ref{T^{k}delta}) for $k=0$.
\end{theorem}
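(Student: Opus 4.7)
\bigskip

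\noindent\textbf{Proof plan for Theorem \ref{Liouville}.} The goal is a Bessel analogue of the classical fact that a (tempered) distribution supported at a point is a finite linear combination of derivatives of the Dirac mass. The natural replacements of $\partial^\alpha\delta$ here are the $T^k\delta_\mu$ (equivalently, by (\ref{HankelT^{k}}), the $S^k\delta_\mu$, since the Hankel transform takes both to monomials $y^{\mu+2k+1/2}$ up to a nonzero constant; this will give the final conversion at the very end). The strategy is therefore to reduce the action of $T$ on an arbitrary $\phi\in\mathcal{H}_\mu$ to finitely many of its ``Taylor coefficients'' $a_{2k}(\phi)$ given by (\ref{taylor2}), each of which is, up to a nonzero scalar, exactly $(T^k\delta_\mu)(\phi)$.

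\smallskip

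First, since $\mathcal{H}_\mu$ is Fr\'echet and $T$ is continuous, there are $M,K\in\N_0$ and $C>0$ with
\[
|T(\phi)|\le C\sum_{m\le M,\,|k|\le K}\gamma_{m,k}^{\mu}(\phi),\qquad \phi\in\mathcal{H}_\mu .
\]
Choose $\eta\in C^{\infty}([0,\infty)^n)$ depending only on $\|x\|^{2}$, with $\eta\equiv 1$ on $\{\|x\|\le 1\}$ and $\eta\equiv 0$ on $\{\|x\|\ge 2\}$; this $\eta$ lies in $\mathcal{O}$ by the result of the preceding section, so $\eta\varphi\in\mathcal{H}_\mu$ whenever $\varphi\in\mathcal{H}_\mu$. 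Set $\eta_\epsilon(x)=\eta(x/\epsilon)$. Since $(1-\eta_\epsilon)\phi$ is supported in $\{\|x\|\ge\epsilon\}$, the hypothesis yields $T(\phi)=T(\eta_\epsilon\phi)$ for every $\epsilon>0$. Now, for $N\ge K$ to be fixed later, use Lemma \ref{Taylor} to write
\[
x^{-\mu-1/2}\phi(x)=\sum_{|k|\le N}a_{2k}(\phi)\,x^{2k}+R_{2N}(x),
\]
with $a_{2k}(\phi)=\tfrac{1}{C_\mu 2^{|k|}k!}\,(T^{k}\delta_\mu)(\phi)$ by (\ref{taylor2}) and (\ref{T^{k}delta}). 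Multiplying by $x^{\mu+1/2}\eta(x)$, both
\[
\varphi_{1}(x):=\eta(x)\,x^{\mu+1/2}\!\!\sum_{|k|\le N}a_{2k}(\phi)\,x^{2k},\qquad \varphi_{2}(x):=\eta(x)\,x^{\mu+1/2}R_{2N}(x),
\]
belong to $\mathcal{H}_\mu$ (the first obviously, the second by subtraction from $\eta\phi$), and
\[
T(\phi)=T(\eta\phi)=\sum_{|k|\le N}a_{2k}(\phi)\,T\!\bigl(\eta(\cdot)\,(\cdot)^{\mu+2k+1/2}\bigr)+T(\varphi_{2}).
\]

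\smallskip

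The crux is to show $T(\varphi_{2})=0$ for $N=K$. Using the support hypothesis again, $T(\varphi_{2})=T(\eta_\epsilon\varphi_{2})$ for all $\epsilon>0$, and $x^{-\mu-1/2}\eta_\epsilon\varphi_{2}=\eta_\epsilon R_{2N}$ (for $\epsilon$ small enough, $\eta\equiv 1$ on the support of $\eta_\epsilon$). I would next establish a quantitative rate for the remainder by iterating Lemma \ref{Taylor}: expanding $R_{2N}$ to a higher order $N'\gg N$ and applying $T^{j}$, all polynomial pieces with $|j|\le N$ yield monomials of degree $\ge 2(N+1-|j|)$, so that
\[
|T^{j}R_{2N}(x)|\le C\,\|x\|^{2(N+1-|j|)}\qquad\text{for } \|x\|\le 1,\; |j|\le N.
\]
Together with $|T^{l}\eta_\epsilon(x)|\le C\,\epsilon^{-2|l|}$ on the support of $\eta_\epsilon$, and the Leibniz-type formula (\ref{Leibniz}), a direct estimate gives
\[
\sup_{x\in\Rn}\bigl|(1+\|x\|^{2})^{m}\,T^{k}(\eta_\epsilon R_{2N})(x)\bigr|\le C_{m,k}\,\epsilon^{\,2(N+1-|k|)},
\]
uniformly for $\|x\|\le 2\epsilon$ and any $m$. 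Choosing $N=K$ and feeding this into the continuity estimate for $T$ shows $|T(\eta_\epsilon\varphi_{2})|\to 0$ as $\epsilon\to 0^{+}$, hence $T(\varphi_{2})=0$. Consequently
\[
T(\phi)=\sum_{|k|\le N}\tfrac{T(\eta\,x^{\mu+2k+1/2})}{C_\mu\,2^{|k|}k!}\,(T^{k}\delta_\mu)(\phi),
\]
i.e. $T$ is a finite linear combination of the $T^{k}\delta_\mu$. Finally, applying $h_\mu$ and using $h_\mu(S^{k}\delta_\mu)=(-1)^{|k|}y^{2k}h_\mu\delta_\mu=(-1)^{|k|}y^{\mu+2k+1/2}$ together with (\ref{HankelT^{k}}), one gets $S^{k}\delta_\mu=\tfrac{C_{\mu+k}}{C_\mu}\,T^{k}\delta_\mu$, so the same representation holds with $S^{k}\delta_\mu$, concluding the proof.

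\smallskip

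\noindent\textbf{Main obstacle.} The delicate point is the remainder step: the cutoff derivatives $T^{k-j}\eta_\epsilon$ blow up like $\epsilon^{-2|k-j|}$, and these must be defeated by the vanishing of $T^{j}R_{2N}$ near $0$. The quantitative decay rate $\|x\|^{2(N+1-|j|)}$, which is exactly what Lemma \ref{Taylor} gives only qualitatively, has to be extracted by iterating the Taylor expansion; getting the right exponent and checking it is uniform in $\epsilon$ is the essential work.
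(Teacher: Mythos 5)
Your plan is correct in outline, but it takes a genuinely different route from the paper's proof of Theorem \ref{Liouville}. The paper never constructs the coefficients explicitly: it invokes \cite[Lemma 1.4.1]{Kesavan} (if the common kernel of the finitely many functionals $S^{k}\delta_{\mu}$, $|k|\leq N_{0}$, is contained in $\ker T$, then $T$ is a linear combination of them), works with the $S^{k}$--based seminorms $\lambda_{m,k}^{\mu}$ of Appendix A, takes $N_{0}=2N$ so that the indices $k+l$, $l\leq k$, produced by the conversion formula (\ref{equiv1}) stay within range, and places the cutoff at the scale $\eta$ where the \emph{qualitative} smallness $|x^{-\mu-1/2}S^{r}\phi(x)|<\varepsilon$ already holds, so no decay rate for the Taylor remainder is ever needed. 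Your decomposition into the cut-off Taylor polynomial plus remainder, with coefficients proportional to $T(\eta\,x^{\mu+2k+1/2})$, is in substance the paper's proof of the companion Theorem \ref{Estructura2} in Section 5, followed by the conversion $S^{k}\delta_{\mu}=\tfrac{C_{\mu+k}}{C_{\mu}}T^{k}\delta_{\mu}$, which you compute correctly. The price of your shrinking cutoff $\eta_{\epsilon}$ is exactly the obstacle you flag: the estimate $|T^{j}R_{2N}(x)|\leq C\|x\|^{2(N+1-|j|)}$ does not follow from the statement of Lemma \ref{Taylor} (which only gives $T^{j}R=o(1)$), and expanding to higher order alone leaves an $o(1)$ tail rather than the needed power; it does follow by downward induction on $|j|$ from the identity $T^{j}g(x)-\lim_{x_{i}\to 0^{+}}T^{j}g=\int_{0}^{x_{i}}t\,T^{j+e_{i}}g\,dt$ applied coordinatewise after expanding to order $N+1$. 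Once that lemma is supplied your argument closes, and it buys something the paper's version does not make explicit: a fully constructive representation with constants independent of $\varepsilon$ (in the paper's proof the bounds $M_{k,l,r}$ on $|T^{k+l-r}\psi|$ tacitly depend on the $\varepsilon$-dependent scale $\eta$), at the cost of proving the extra quantitative Taylor estimate.
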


\begin{proof}The proof will follow directly from  \cite[Lemma 1.4.1]{Kesavan}  if we can show that  there exist $N_{0}$ such that if $\phi\in\mathcal{H}_{\mu}$ satisfies $(S^{k}\delta_{\mu}, \phi)=0$ for  $|k|\leq N_{0}$, then $(T, \phi)=0$.

Consider the family of seminorms $\{\lambda_{m,k}^{\mu}\}$ defined by (\ref{S2}) which generate the same topology in $\mathcal{H}_{\mu}$ as the family $\{\gamma_{m,k}^{\mu}\}$ (see Appendix A) and  let   

\begin{equation*}
\rho_{R}^{\mu}(\phi)=\sum_{\underset{|k|\leq R}{m\leq R}}\lambda_{m,k}^{\mu}(\phi).
\end{equation*}

This  family of seminorms result to be an increasing and equivalent to $\{\lambda_{m,k}^{\mu}\}$. So, given $T\in\mathcal{H'}_{\mu}$, there exist $c>0$ and $N\in\N_{0}$   such that 

\begin{equation*}
|(T, \phi)|\leq C \rho_{N}^{\mu}(\phi), \qquad \phi\in\mathcal{H}_{\mu}.
\end{equation*}\\

Now, let $\phi\in\mathcal{H}_{\mu}$ satisfying $(S^{k}\delta_{\mu}, \phi)=0$, for all $|k|\leq N_{0}$, where $N_{0}=2N$ then:

\begin{equation*}
\lim\limits_{\underset{x_{i}>0}{x\to 0}}
 x^{-\mu-1/2}S^{k}\phi(x)=0.
\end{equation*}

\noindent Given $\varepsilon>0 $ there exists $\eta_{k}>0$ such as $|x^{-\mu-1/2}S^{k}\phi(x)|<\varepsilon$, for all $x\in\Rn$ ,  $\|x\|< \eta_{k}$ for all $k$ such that $|k|<N_{0}$ .

\smallskip

Set $\eta=\underset{|k|\leq N_{0}}{\min}\{\eta_{k}\}$ and $\eta<1$, then

\begin{equation*}
|x^{-\mu-1/2}S^{k}\phi(x)|<\varepsilon \qquad\forall x\in\Rn : \|x\|< \eta.
\end{equation*}

Fix $\eta^{*}$  satisfying $0<\eta^{*}<\eta<1$ and define a smooth function $\psi$ on $\Rn$ by   $\psi(x)= 1$ for $\{x\in\Rn:\|x\|<\eta^{*} \}$ and   $\psi(x)= 0$ for $\{x\in\Rn:\|x\|\geq\eta\}$.

\smallskip

We claim that $\psi\in\mathcal{O}$. In fact, since $\psi\in C^{\infty}(\Rn)$ there exist $M_{k}>0$ such that $|T^{k}\psi(x)|\leq M_{k}$ then there  exist $n_{k}\in\N$ such that 
\[
|(1+\|x\|^{2})^{-n_{k}}T^{k}\psi(x)|<\infty.
\]

\medskip

Since $\supp((1-\psi)\phi)\subset \{x\in\Rn : \|x\|\geq \eta^{*}\}$, then for the hypothesis
 
\[
((1-\psi)T, \phi)=(T, (1-\psi)\phi)=0\qquad \forall\phi\in\mathcal{H}_{\mu}.
\]

\noindent From the above it follows that $T=\psi T$, then

\begin{align}
|(T,\phi)| &= |(\psi T,\phi)|=|(T, \psi\phi)|\leq C\rho_{N}^{\mu}(\psi\phi)=\nonumber\\
& = C \sum_{\underset{|k|\leq N}{m\leq N}}
\sup_{x\in\Rn}|(1+\|x\|^{2})^{m}x^{-\mu-1/2}S^{k}(\psi\phi)(x)|.\label{L5}
\end{align}

\noindent Since $\supp \psi \subset\{x\in\Rn : \|x\|\leq\eta\}$, then 

$$\sup_{x\in\Rn}|(1+\|x\|^{2})^{m}x^{-\mu-1/2}S^{k}(\psi\phi)(x) 
\leq$$ 
\begin{equation}\label{L6}
\sup_{\|x\|<\eta^{*}}|(1+\|x\|^{2})^{m}x^{-\mu-1/2}S^{k}\phi(x)|
+ \sup_{\eta^{*}\leq\|x\|<\eta}|(1+\|x\|^{2})^{m}x^{-\mu-1/2}S^{k}(\psi\phi)(x)|.
\end{equation}

\medskip

\noindent If we consider $\|x\|<\eta^{*}$, then

\begin{equation}\label{L7}
\sup_{\|x\|\leq\eta^{*}}|(1+\|x\|^{2})^{m}x^{-\mu-1/2}S^{k}\phi(x)|
\leq 2^{|m|}\varepsilon.
\end{equation}

\noindent Now we consider $\eta^{*}\leq\|x\|<\eta$. Applying (\ref{equiv2}) and (\ref{Leibniz}) we obtain that

\begin{align}
x^{-\mu-1/2} S^{k}(\psi\phi)(x)
&=\sum_{l=0}^{k}b_{l,k}x^{2l}T^{k+l}\{x^{-\mu-1/2}(\psi\phi)(x)\}\nonumber\\
&=\sum_{l=0}^{k}b_{l,k}x^{2l}
  \sum_{r=0}^{k+l}\binom{k+l}{r}T^{k+l-r}\psi(x)T^{r}\{x^{-\mu-1/2}\phi(x)\}.\label{L3}
\end{align}

\noindent Since $\psi\in C^{\infty}(\Rn)$ , there exist positive constants such that

\begin{equation}\label{L4}
|T^{k+l-r}\psi(x)|\leq M_{k,l,r},
\end{equation}

\noindent in  $\eta^{*}\leq\|x\|<\eta$. Accordingly to (\ref{L3}) and (\ref{L4}) we now have that

\begin{align}
|(1+\|x\|^{2})^{m}&x^{-\mu-1/2}S^{k}(\psi\phi)(x)|=\nonumber\\
&=\left|(1+\|x\|^{2})^{m}
\sum_{l=0}^{k}b_{l,k}x^{2l}
\sum_{r=0}^{k+l}
\binom{k+l}{r}T^{k+l-r}\psi(x)T^{r}\{x^{-\mu-1/2}\phi(x)\}\right|\nonumber\\
&\leq(1+\|x\|^{2})^{m}
\sum_{l=0}^{k}\sum_{r=0}^{k+l}
|b_{l,k}|\binom{k+l}{r} 
|T^{k+l-r}\psi(x)|\,\left|x^{2l}T^{r}\{x^{-\mu-1/2}\phi(x)\}\right|\nonumber\\
&\leq(1+\|x\|^{2})^{m}
\sum_{l=0}^{k}\sum_{r=0}^{k+l}
|b_{l,k}|\binom{k+l}{r}
M_{k,l,r}\,\left|x^{2l}T^{r}\{x^{-\mu-1/2}\phi(x)\}\right|\nonumber\\
&=
\sum_{l=0}^{k}\sum_{r=0}^{k+l} 
M^{*}_{k,l,r}\,(1+\|x\|^{2})^{m}x^{2l}|T^{r}\{x^{-\mu-1/2}\phi(x)\}|\nonumber\\
&\leq
\sum_{l=0}^{k}\sum_{r=0}^{k+l} 
M^{*}_{k,l,r}\,(1+\|x\|^{2})^{m+l}|T^{r}\{x^{-\mu-1/2}\phi(x)\}|\nonumber\\
&\leq
\sum_{l=0}^{k}\sum_{r=0}^{k+l} 
B_{k,l,r}\sup_{x\in\Rn}
|(1+\|x\|^{2})^{m+l}x^{-\mu-1/2}S^{r}\phi(x)|.\label{L8}
\end{align}

\noindent Since $|r|\leq |2k|\leq 2N=N_{0}$ then \begin{equation}\label{L9}
|(1+\|x\|^{2})^{m+l}x^{-\mu-1/2}S^{r}\phi(x)|\leq
2^{|m+l|}|x^{-\mu-1/2}S^{r}\phi(x)|\leq 2^{|m+l|} \varepsilon. 
\end{equation}

\noindent From (\ref{L5}), (\ref{L6}), (\ref{L7}), (\ref{L8}) and (\ref{L9}) then:
\begin{align*}
|(T,\phi)|&\leq
C \sum_{\underset{|k|\leq N}{m\leq N}}
\sup_{x\in\Rn}|(1+\|x\|^{2})^{m}x^{-\mu-1/2}S^{k}(\psi\phi)(x)|\leq\\
&\leq C\sum_{\underset{|k|\leq N}{m\leq N}}
\left(
2^{|m|}\varepsilon+
\sum_{l=0}^{k}\sum_{r=0}^{k+l} B_{k,l,r}2^{|m+l|}\varepsilon\right)=C'\varepsilon
\end{align*}

\noindent with $C'=C\sum_{\underset{|k|\leq N}{m\leq N}}
\left(
2^{|m|}+
\sum_{l=0}^{k}\sum_{r=0}^{k+l} B_{k,l,r}2^{|m+l|}\right)$. Hence $(T,\phi)=0$ since $\varepsilon>0$ was arbitrarily chosen.

\end{proof}

\begin{lemma}
  Let $\psi\in C^{\infty}(\Rn)$ such that  $\psi(x)=1$ if  $x_{1}+\ldots+x_{n}\geq a^{2}\:$,  $\psi(x)=0$ if $x_{1}+\ldots+x_{n}\leq b^{2}$ with $0<b^{2}\leq a^{2}$ and $0\leq\psi\leq1$. And let $P[x]=\underset{|\alpha|\leq N}{\sum} a_{\alpha}x^{\alpha}\neq 0$ for all $x\in\R^{n}-\{0\}$ and all its coefficients have the same sign  , therefore

  $$P[x_{1}^{2},\ldots,x_{n}^{2}]^{-1}\psi(x_{1}^{2},\ldots,x_{n}^{2})\in\mathcal{O}.$$
\end{lemma}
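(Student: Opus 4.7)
The plan is to reduce everything to estimates on the function $F(y) = \psi(y)/P(y)$ of the auxiliary variable $y = (x_1^2,\ldots,x_n^2)$, using the identity $T^k[g(x_1^2,\ldots,x_n^2)] = 2^{|k|}(\partial_y^k g)(x_1^2,\ldots,x_n^2)$ which follows at once from $T_i = x_i^{-1}\partial/\partial x_i$ and the chain rule. Since the weight transforms as $(1+\|x\|^2)^{n_k}=(1+y_1+\cdots+y_n)^{n_k}$, the required $\mathcal{O}$-bound for $P[x_1^2,\ldots,x_n^2]^{-1}\psi(x_1^2,\ldots,x_n^2)$ becomes the statement that for every multi-index $k$ there is an integer $n_k$ with $(1+\|y\|_1)^{n_k}|\partial_y^k F(y)|$ bounded on $[0,\infty)^n$.

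After replacing $P$ by $-P$ if necessary, I assume all coefficients satisfy $a_\alpha \geq 0$. The hypothesis $P[y]\neq 0$ on $\R^n\setminus\{0\}$ then forces $P(y)>0$ on $[0,\infty)^n\setminus\{0\}$. Because $\psi$ vanishes on the neighbourhood $\{y\geq 0:\|y\|_1\leq b^2\}$ of the origin, which is the only zero of $P$ in $[0,\infty)^n$, the quotient $F$ (defined as zero where $\psi=0$) is smooth on $[0,\infty)^n$. Moreover each $\partial P/\partial y_i$ has nonnegative coefficients, so $P$ is monotonically nondecreasing along each coordinate on $[0,\infty)^n$; hence
$$P(y)\;\geq\; c_0\;:=\;\min\bigl\{P(y')\,:\,y'\geq 0,\ \|y'\|_1=b^2\bigr\}\;>\;0$$
for every $y\in\supp\psi$, with $c_0>0$ by compactness of the slice and strict positivity of $P$ away from the origin.

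Now I expand by Leibniz on $[0,\infty)^n\setminus\{0\}$:
$$\partial_y^k F \;=\; \sum_{j\leq k}\binom{k}{j}\bigl(\partial_y^{k-j}\psi\bigr)\bigl(\partial_y^{j}(1/P)\bigr).$$
A standard induction yields $\partial_y^j(1/P)=R_j(y)/P(y)^{j+1}$ for some polynomial $R_j$, so on $\supp\psi$ the uniform lower bound $P\geq c_0$ combined with polynomial growth of $R_j$ gives $|\partial_y^j(1/P)|\leq C(1+\|y\|_1)^{d_j}$ for a suitable integer $d_j$. For $j<k$ the factor $\partial_y^{k-j}\psi$ is supported in the compact transition strip $\{b^2\leq\|y\|_1\leq a^2\}$, where every factor is bounded, so these terms contribute only bounded pieces. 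The dominant $j=k$ term $\psi(y)R_k(y)/P(y)^{k+1}$ is controlled by $C(1+\|y\|_1)^{d_k}$ on $\supp\psi$ and is zero elsewhere. Choosing $n_k=-\max_{j\leq k}d_j$ concludes the $\mathcal{O}$-estimate.

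The main obstacle I anticipate is the uniform lower bound $P\geq c_0$ on $\supp\psi$. The example $P[y]=y_1y_2$, which has nonnegative coefficients but vanishes on the boundary of the nonnegative orthant, shows that without the full hypothesis $P\neq 0$ on $\R^n\setminus\{0\}$ no such bound exists. The sign hypothesis on the coefficients is what drives the coordinatewise monotonicity that reduces the infimum on the unbounded set $\{\|y\|_1\geq b^2,\ y\geq 0\}$ to the infimum on the compact slice $\{\|y\|_1=b^2,\ y\geq 0\}$, where the non-vanishing hypothesis kicks in.
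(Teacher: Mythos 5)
Your proof is correct, and it follows the same overall strategy as the paper's: split the analysis between the transition region where the derivatives of $\psi$ live (compact, so everything is bounded there) and the region where $\psi\equiv 1$; bound $P[x_{1}^{2},\ldots,x_{n}^{2}]$ from below by a positive constant on $\supp\psi$; and absorb the polynomial numerators into a power of $(1+\|x\|^{2})$. Two points where your execution differs and is arguably tighter. First, the substitution $y=(x_{1}^{2},\ldots,x_{n}^{2})$ together with the identity $T^{k}\{g(x_{1}^{2},\ldots,x_{n}^{2})\}=2^{|k|}(\partial_{y}^{k}g)(x_{1}^{2},\ldots,x_{n}^{2})$ lets you run the Leibniz--induction argument for an arbitrary multi-index $k$ in one stroke, whereas the paper only writes out the case $k=e_{i}$ and asserts that the general case ``follows in a similar way.'' Second, your lower bound $P\geq c_{0}$ on $\supp\psi$ comes from coordinatewise monotonicity (all coefficients of one sign) plus scaling down to the compact slice $\{y\geq 0:\|y\|_{1}=b^{2}\}$; the paper instead normalizes to the unit sphere and splits into the cases $a_{0}\neq 0$ and $a_{0}=0$, which your argument handles uniformly. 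Only cosmetic issues remain: the exponent in $\partial_{y}^{j}(1/P)=R_{j}/P^{j+1}$ should involve $|j|$ rather than the multi-index $j$ itself, and smoothness of the quotient is only needed on $(0,\infty)^{n}$ (where $P>0$ makes it automatic), not on the closed orthant.
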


\begin{proof}

Let $P[x_{1},\ldots, x_{n}]=
\underset{|\alpha|\leq N}{\sum}
a_{\alpha}x_{1}^{\alpha_{1}}\ldots x_{n}^{\alpha_{n}}$.

The aim of this proof is to verify that for all $k\in\N_{0}^{n}$ there exists $n_{k}\in\Z$ such that 

\begin{equation*}
|(1+\|x\|^{2})^{n_{k}}T^{k}\{P[x_{1}^{2},\ldots,x_{n}^{2}]^{-1}\psi(x_{1}^{2},\ldots,x_{n}^{2})\}|\leq C \quad \forall x\in\Rn.
\end{equation*}

\smallskip

\noindent For $b\leq \|x\|\leq a$ it turns out that

\begin{eqnarray}
&& T^{e_{i}} \{P[x_{1}^{2},\ldots,x_{n}^{2}]^{-1}
              \psi(x_{1}^{2},\ldots,x_{n}^{2})\}
 =x_{i}^{-1}\frac{\partial}{\partial x_{i}}    \{P[x_{1}^{2},\ldots,x_{n}^{2}]^{-1}
\psi(x_{1}^{2},\ldots,x_{n}^{2})\}=\nonumber\\
 %&& x_{i}^{-1}\{
 %   -P[x_{1}^{2},\ldots,x_{n}^{2}]^{-2}
 %   \sum_{0<\alpha\leq\beta}2\alpha_{i}a_{\alpha}\,x_{1}^{2\alpha_{1}}\ldots x_{i}^{2\alpha_{i}-1}\ldots x_{n}^{2\alpha_{n}}
 %   \psi(x_{1}^{2},\ldots,x_{n}^{2})+\\
 %&& P[x_{1}^{2},\ldots,x_{n}^{2}]^{-1}\frac{\partial \psi}{\partial x_{i}}(x_{1}^{2},\ldots,x_{n}^{2}).2x_{i}\}=\\
&& =P[x_{1}^{2},\ldots,x_{n}^{2}]^{-2}
               \tilde{P}[x_{1}^{2},\ldots,x_{n}^{2}]
               \psi(x_{1}^{2},\ldots,x_{n}^{2})+\nonumber\\
&&\hspace{16em}
  +2P[x_{1}^{2},\ldots,x_{n}^{2}]^{-1}\frac{\partial \psi}{\partial x_{i}}(x_{1}^{2},\ldots,x_{n}^{2}).\label{L1}
\end{eqnarray}

Since all the functions involved, $\psi$  and its derivatives are all continuous in $b\leq\|x\|\leq a$,  it is clear that (\ref{L1}) is bounded. On the other hand, if $\|x\|\geq a$, since $\psi(x)=1$ then

 \begin{eqnarray*}
T^{e_{i}}\{P[x_{1}^{2},\ldots,x_{n}^{2}]^{-1}\}
  &=& x_{i}^{-1}\frac{\partial}{\partial x_{i}} \{P[x_{1}^{2},\ldots,x_{n}^{2}]^{-1}\}\\
 %&& x_{i}^{-1}\{
 %   -P[x_{1}^{2},\ldots,x_{n}^{2}]^{-2}
 %   \sum_{0<\alpha\leq\beta}2\alpha_{i}a_{\alpha}\,x_{1}^{2\alpha_{1}}\ldots x_{i}^{2\alpha_{i}-1}\ldots x_{n}^{2\alpha_{n}}\}\\
 &=& P[x_{1}^{2},\ldots,x_{n}^{2}]^{-2}
                   \tilde{P}[x_{1}^{2},\ldots,x_{n}^{2}]
 \end{eqnarray*}

We already shown that $\tilde{P}$ is in $\mathcal{O}$, so, there exist $r\in\Z$ such that $|\tilde{P}[x_{1}^{2},\ldots,x_{n}^{2}]|\leq C(1+\|x\|^{2})^{r}$. Without loss of generality suppose that all $a_{\alpha}$ are positives and let us first consider  $a_{0}\neq 0$, then $P[x_{1}^{2},\ldots,x_{n}^{2}]^{-2}$ is bounded as in (\ref{O3}).

\medskip
If now we consider $a_{0}=0$, since $P[x_{1}^{2},\ldots,x_{n}^{2}]> 0$ for $(x_{1},\ldots,x_{n})\neq (0,\ldots,0)$ then $P$  must attain  a minimum  in $S^{n-1}$. Let  $\delta$ be such that

\begin{equation}\label{L10}
\delta< P\left[\frac{x_{1}^{2}}{\|x\|^{2}},\ldots,
\frac{x_{n}^{2}}{\|x\|^{2}}
\right]
=\sum_{1\leq|\alpha|\leq N}a_{\alpha}
\frac{x_{1}^{2\alpha_{1}}\ldots x_{n}^{2\alpha_{n}}}{\|x\|^{2|\alpha|}}.
\end{equation}

Since $\|x\|\geq a$ and $|\alpha|\geq 1$ then
\begin{equation}\label{L11}
\|x\|^{2|\alpha|}>a^{2|\alpha|}
\end{equation}

From (\ref{L10}) and (\ref{L11}) we obtain that 

\begin{equation}
\delta <C\sum_{1\leq|\alpha|\leq N}a_{\alpha}x_{1}^{2\alpha_{1}}\ldots x_{n}^{2\alpha_{n}}
\end{equation}

\noindent with  $C=
\underset{1\leq|\alpha|\leq N}{\max} a^{-2|\alpha|}$, then

$$P[x_{1}^{2},\ldots,x_{n}^{2}]^{-2}\leq 
C^{2}\delta^{-2}.$$

Then, 

\begin{equation}\label{L2}
\sup_{\|x\|\geq a}
|T^{e_{i}}\{P[x_{1}^{2},\ldots,x_{n}^{2}]^{-1}|\leq
C'(1+\|x\|^{2})^{r}.
\end{equation} 

From equations (\ref{L1}) and (\ref{L2}) the Lemma follows for $k=e_{i}$. The general case follows in a similar way.

\end{proof}

Now we are ready for the proof of the Theorem \ref{GeneralLiouville}.
\begin{proof}[Proof of Theorem \ref{GeneralLiouville}]

 If $L(f)=0$ this means that

 $$\sum_{|\alpha|\leq N}(-1)^{|\alpha|}a_{\alpha}S^{\alpha}f=0.$$

\noindent Since $h_{\mu}(S_{\mu_{i}}f)=-y_{i}^{2}h_{\mu}f$ (see \cite{Molina2}),  applying Hankel transform to both sides, we have

\begin{eqnarray}
h_{\mu}\left(\sum_{|\alpha|\leq N}(-1)^{|\alpha|}a_{\alpha}S^{\alpha}f \right)&=&   \sum_{|\alpha|\leq N}(-1)^{|\alpha|}a_{\alpha}(-1)^{|\alpha|} y_{1}^{2\alpha_{1}}\ldots y_{n}^{2\alpha_{n}}h_{\mu} f =\nonumber\\
&=&P[y_{1}^{2},\ldots, y_{n}^{2}]h_{\mu} f=0 .\label{GL1}
\end{eqnarray}

 Let $\psi$ being as in the previous Lemma. Then $[P[y_{1}^{2},\ldots, y_{n}^{2}]]^{-1}\psi(y_{1}^{2},\ldots, y_{n}^{2})\in\mathcal{O}$. Then multiplying  in (\ref{GL1}) we obtain that

\begin{equation}\label{GL2}
\psi(y_{1}^{2},\ldots, y_{n}^{2}).h_{\mu} f=0.
\end{equation}

\noindent Let $\phi\in\mathcal{H}_{\mu}$ with $\supp \phi\subset \{x\in\Rn :\|x\|\geq a\}$ and let us see that

$$(h_{\mu} f, \phi)=0. $$

\noindent Since  $\psi(x_{1}^{2},\ldots,x_{n}^{2}).\phi(x_{1},\ldots.x_{n})=\phi(x_{1},\ldots.x_{n})$ in $\Rn$, then

\begin{equation}\label{GL3}
(h_{\mu} f, \phi)=
(h_{\mu} f, \psi\phi)=(\psi h_{\mu} f, \phi)=0,
\end{equation}

\noindent where we have used (\ref{GL2}). Consequently $h_{\mu} f$ is zero for all $\phi$ such that $\supp \phi\subset \{x\in\Rn :\|x\|\geq a\}$. For Theorem \ref{Liouville} there exist $N_{1}\in\N_{0}$ and constants $c_{k}$, $|k|\leq N_{1}$ such that

\begin{equation}\label{GL4}
h_{\mu} f=\sum_{|k|\leq N_{1}}c_{k}S^{k}\delta_{\mu}.
\end{equation}

Therefore, applying the Hankel transform $h_{\mu}$ to both sides of (\ref{GL4}) and since $h_{\mu}=(h_{\mu})^{-1}$ we obtain that

\begin{eqnarray*}
% \nonumber % Remove numbering (before each equation)
f &=& h_{\mu}(h_{\mu} f)=\sum_{|k|\leq N_{1}}    c_{k}\nH_{\mu}(S^{k}\delta_{\mu})= \\
  &=& \sum_{|k|\leq N_{1}} c_{k}(-1)^{|k|}y_{1}^{2k_{1}}\ldots y_{1}^{2k_{n}}h_{\mu} \delta_{\mu} \\
   &=& \sum_{|k|\leq N_{1}}c_{k}(-1)^{|k|}y_{1}^{2k_{1}}\ldots y_{1}^{2k_{n}} y^{\mu+1/2},
\end{eqnarray*}
 which completes the proof.

\end{proof}

\section{Another proof of Theorem \ref{GeneralLiouville}}

We establish  a different representation theorem from the one proved in the previous section.

\begin{theorem}\label{Estructura2}
	Let $f\in\mathcal{H'}_{\mu}$ satisfying $(f, \phi)=0$ for all $\phi\in\mathcal{H}_{\mu}$ with $\supp(\phi)\subset \{x\in\Rn:\|x\|\geq a\}$ for some $a\in\R$,  $a>0$. Then there exist $N\in\N_{0}$ and scalars $c_{k}$, $|k|\leq N$ such that
	$$f=\sum_{|k|\leq N} c_{k} T^{k}\delta_{\mu},$$
	where $T^{k}\delta_{\mu}$ given by (\ref{T^{k}delta}).
\end{theorem}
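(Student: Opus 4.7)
The plan is to mirror the argument used in the proof of Theorem \ref{Liouville}, exploiting the fact that the natural seminorms $\gamma_{m,k}^{\mu}$ on $\mathcal{H}_{\mu}$ defined by \eqref{S1} are built directly from the operators $T^{k}$. This will let us work with $T^{k}$ throughout, avoiding the expansion $x^{-\mu-1/2}S^{k}(\psi\phi)=\sum_{l\leq k}b_{l,k}x^{2l}T^{k+l}\{x^{-\mu-1/2}\psi\phi\}$ that, in the previous proof, forced the order of vanishing to double from $N$ to $2N$.

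First I would invoke \cite[Lemma 1.4.1]{Kesavan} to reduce the statement to showing that there exists $N\in\N_{0}$ such that every $\phi\in\mathcal{H}_{\mu}$ with $(T^{k}\delta_{\mu},\phi)=0$ for all $|k|\leq N$ also satisfies $(f,\phi)=0$. By continuity of $f$ on the Fr\'echet space $\mathcal{H}_{\mu}$, fix $N\in\N_{0}$ and $C>0$ such that
$$|(f,\phi)|\leq C\sum_{\substack{|k|\leq N\\ m\leq N}}\gamma_{m,k}^{\mu}(\phi),\qquad\phi\in\mathcal{H}_{\mu},$$
and take the same $N$ in the reduction. Unwinding \eqref{T^{k}delta}, the hypothesis on $\phi$ reads $\lim_{x\to 0^{+}}T^{k}\{x^{-\mu-1/2}\phi(x)\}=0$ for every $|k|\leq N$.

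Given $\varepsilon>0$, I would choose $\eta\in(0,1)$ so that $|T^{k}\{x^{-\mu-1/2}\phi(x)\}|<\varepsilon$ whenever $\|x\|<\eta$ and $|k|\leq N$, fix $\eta^{*}\in(0,\eta)$, and take a cutoff $\psi\in C^{\infty}(\Rn)$ equal to $1$ on $\|x\|<\eta^{*}$ and supported in $\|x\|<\eta$; this is the same cutoff already shown to lie in $\mathcal{O}$ in the proof of Theorem \ref{Liouville}. Since $\supp((1-\psi)\phi)\subset\{\|x\|\geq\eta^{*}\}$, the hypothesis on $f$ yields $(f,(1-\psi)\phi)=0$, whence $(f,\phi)=(f,\psi\phi)$, and it suffices to bound each $\gamma_{m,k}^{\mu}(\psi\phi)$ with $m,|k|\leq N$. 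On $\|x\|<\eta^{*}$ we have $\psi\equiv 1$ and $(1+\|x\|^{2})^{m}\leq 2^{N}$, so this piece is bounded by $2^{N}\varepsilon$. On the shell $\eta^{*}\leq\|x\|<\eta$, Leibniz's identity \eqref{Leibniz} gives
$$T^{k}\{x^{-\mu-1/2}\psi(x)\phi(x)\}=\sum_{j=0}^{k}\binom{k}{j}T^{k-j}\psi(x)\cdot T^{j}\{x^{-\mu-1/2}\phi(x)\},$$
where each $|T^{k-j}\psi|$ is uniformly bounded on the compact shell by smoothness of $\psi$, each $|T^{j}\{x^{-\mu-1/2}\phi\}|<\varepsilon$ because $|j|\leq|k|\leq N$, and $(1+\|x\|^{2})^{m}\leq 2^{N}$. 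Summing over $m,|k|\leq N$ gives $|(f,\phi)|\leq C'\varepsilon$, and letting $\varepsilon\to 0$ forces $(f,\phi)=0$.

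The main technical point is organising the Leibniz expansion on the transition shell $\eta^{*}\leq\|x\|\leq\eta$ and absorbing the finitely many constants that depend on $\psi$, $N$, and $\mu$ into a single $C'$; however, this is genuinely lighter than in Theorem \ref{Liouville}, since no conversion between $S^{k}$- and $T^{k}$-expansions is needed. As a byproduct, the reduction works with $N_{0}=N$ rather than $N_{0}=2N$.
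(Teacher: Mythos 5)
Your proof is correct, but it is not the route the paper takes for this theorem: it is essentially a transplant of the paper's proof of Theorem \ref{Liouville}, with $S^{k}\delta_{\mu}$ replaced by $T^{k}\delta_{\mu}$. The paper proves Theorem \ref{Estructura2} without invoking \cite[Lemma 1.4.1]{Kesavan} at all: it substitutes the Taylor expansion of Lemma \ref{Taylor}, written in the form $\phi(x)=\frac{x^{\mu+1/2}}{C_{\mu}}\{(\delta_{\mu},\phi)+\dots+C_{\mu}R_{2N}(x)\}$, into $(f,\psi\phi)$, which yields the representation with \emph{explicit} coefficients $c_{k}=(C_{\mu}2^{|k|}k!)^{-1}(f,\,x^{\mu+1/2}x^{2k}\psi(x))$, and then kills the leftover term $(f,x^{\mu+1/2}\psi R_{2N})$ by the same kind of $\varepsilon$--cutoff estimate you use, exploiting that $T^{j}R_{2N}(x)=o(1)$ as $x\to 0$ for $|j|\leq N$. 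Your version buys economy: because the seminorms $\gamma_{m,k}^{\mu}$ of \eqref{S1} are built directly from $T^{k}$, you need neither the equivalent family $\lambda_{m,k}^{\mu}$ nor the doubling $N_{0}=2N$, and you only use from Lemma \ref{Taylor} the existence of the limits defining $(T^{k}\delta_{\mu},\phi)$ rather than the full expansion with remainder. The paper's version buys a constructive formula for the $c_{k}$ and avoids the abstract kernel-inclusion lemma. Two small points you should tighten, both of which are equally loose in the paper: the uniform bound on $|T^{k-j}\psi|$ over the shell $\eta^{*}\leq\|x\|<\eta$ is not automatic from smoothness alone, since $T_{i}=x_{i}^{-1}\partial/\partial x_{i}$ is singular near the coordinate hyperplanes, so $\psi$ should be taken to be a smooth function of $\|x\|^{2}$ (or even in each variable); and the step $(f,(1-\psi)\phi)=0$ requires $\eta^{*}\geq a$, i.e.\ the hypothesis must be read as holding for arbitrarily small radii, which is how it is in fact used in the application to Theorem \ref{GeneralLiouville}.
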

\begin{proof}

	Let  $f\in\mathcal{H'}_{\mu}$, such that $f$ verifies the hypothesis of the theorem and  $c>0$, $N\in\N_{0}$ such that 
	
	\begin{equation}\label{E2}
	|(f,\phi)|\leq C \sum_{\underset{|k|\leq N}{m\leq N}}\gamma_{m,k}^{\mu}(\phi),   \qquad \phi\in\mathcal{H}_{\mu}.
	\end{equation}

	By the Taylor formula and (\ref{T^{k}delta}), if  $\phi\in\mathcal{H}_{\mu}$ 
	
	\begin{equation}\label{E3}
	\phi(x)=\frac{x^{\mu+1/2}}{C_{\mu}}
	\left\{
	(\delta_{\mu},\phi)+
	\sum_{|k_{1}|=1} (T^{k_{1}}\delta_{\mu},\phi)\frac{x^{2k_{1}}}{2 k_{1}!}
	+\ldots+
	\sum_{|k_{N}|=N}
	(T^{k_{N}}\delta_{\mu},\phi)\frac{x^{2k_{N}}}{2^{N} k_{N}!}+
	C_{\mu}R_{2N}(x)
	\right\}
	\end{equation}

\noindent 	where the remain term satisfies  
	$\lim\limits_{\underset{x_{i}>0}{x\to 0}}T^{k}R_{2N}(x)=0$ for all $k$ multi-index such that $|k|\leq N$. Then, given $\varepsilon>0$ there exist $\eta_{k}>0$ such that 
	$|T^{k}R_{2N}(x)|<\varepsilon  \quad$ for $x\in\Rn:\|x\|<\eta_{k}$. Set $\eta=\min_{|k|\leq N}\{\eta_{k}\}$ and $\eta<1$, then

	$$|T^{k}R_{2N}(x)|<\varepsilon  \quad\forall x\in\Rn:\|x\|<\eta, \quad |k|\leq N.$$
	
	Let $a\in\R$ such that $0<a<\eta$ and  define  $\psi$ a smooth function on $\Rn$ by $\psi(x)=1$ for $\{x\in\Rn:\|x\|< a/2\}$ and  $\psi(x)=0$ for $\{x\in\Rn:\|x\|\geq a\}$ and therefore $(f,(1-\psi(x))\phi(x)=0$ for any $\phi\in\mathcal{H}_{\mu}$. Hence 
	
	\begin{equation}\label{E1}
	(f,\phi)=(f,\psi\phi).
	\end{equation}

	Therefore

	%\begin{align*}
	%(f,\phi) &=
	%(\delta_{\mu},\phi)(f,\frac{x^{\mu+1/2}}{C_{\mu}}\psi(x))+
	%\sum_{|k_{1}|=1} (T^{k_{1}}\delta_{\mu},\phi)
	%(f,\frac{x^{\mu+1/2}}{C_{\mu}}\frac{x^{2k_{1}}\psi(x)}{2 k_{1}!})
	%+\ldots+\\
	%&+\ldots+
	%\sum_{|k_{r}|=r}
	%(T^{k_{r}}\delta_{\mu},\phi)
	%(f,\frac{x^{\mu+1/2}}{C_{\mu}}\frac{x^{2k_{r}}\psi(x)}{2^{r} k_{r}!})+
	%(f,\frac{x^{\mu+1/2}}{C_{\mu}}\psi(x)R_{2r}(x))
	%\end{align*}

	\begin{equation}\label{E4}
	(f,\phi)=\sum_{|k|\leq N}
	c_{k}\,(T^{k}\delta_{\mu},\phi)+
	(f,x^{\mu+1/2}\psi(x)R_{2N}(x))
	\end{equation}
	
	where
	\begin{equation}\label{E5}
	c_{k}=\frac{1}{C_{\mu}2^{|k|}k!}
	(f, x^{\mu+1/2}x^{2k}\psi(x)).
	\end{equation} 
	
	Applying the estimate (\ref{E2}) to $x^{\mu+1/2}\psi(x)R_{2N}(x)$, we get 
	
	\[
	|(f,x^{\mu+1/2}\psi(x)R_{2N}(x))|\leq C \sum_{\underset{|k|\leq N}{m\leq N}}\gamma_{m,k}^{\mu}(x^{\mu+1/2}\psi(x)R_{2N}(x))
	\]

	\begin{align*}
	&|(1+\|x\|^{2})^{m}T^{k}\{x^{-\mu-1/2}x^{\mu+1/2}\psi(x)R_{2N}(x)\}|
	\leq\\
	&\sup_{\|x\|<\frac{a}{2}}      
	|(1+\|x\|^{2})^{m} T^{k}\{ R_{2N}(x)\}|  +
	\sup_{\frac{a}{2}\leq\|x\|<a}       
	|(1+\|x\|^{2})^{m} T^{k}\{\psi(x) R_{2N}(x)\}|\leq\\
	& \sup_{\|x\|<\frac{a}{2}}      
	|(1+\|x\|^{2})^{m} T^{k}\{ R_{2N}(x)\}|  +
	\sup_{\frac{a}{2}\leq\|x\|<a} 
	|(1+\|x\|^{2})^{m} \sum_{j=0}^{k} T^{k-j}\psi(x) T^{j}R_{2N}(x)|\leq\\
	&\sup_{\|x\|<\frac{a}{2}}      
	|(1+\|x\|^{2})^{m} T^{k}\{ R_{2N}(x)\}|  +
	\sup_{\frac{a}{2}\leq\|x\|<a} 
	\sum_{j=0}^{k} M_{j,k}|(1+\|x\|^{2})^{m} T^{j}R_{2N}(x)|
	\end{align*}

	For $\|x\|<\eta$

	\[
	|(f,x^{\mu+1/2}\psi(x)R_{2N}(x))|\leq C 
	\sum_{\underset{|k|\leq N}{m\leq N}}
	2^{m}\left(1+\sum_{j=0}^{k} M_{j,k}
	\right)\varepsilon =
	C'\varepsilon
	\]

	Thus $(f,x^{\mu+1/2}\psi(x)R_{2N}(x))=0$ since $\varepsilon$ was arbitrarily chosen. Therefore
	
	\[ (f,\phi)=\sum_{|k|\leq N} c_{k}\,(T^{k}\delta_{\mu},\phi) \]

\end{proof}

Now we can sketch a different proof for Theorem \ref{GeneralLiouville}.

\medskip
\begin{proof}[Another proof of Theorem \ref{GeneralLiouville}]
	
If $L(f)=0$, then we obtain as in (\ref{GL3}) that $h_{\mu}f$ is zero for all $\phi$ such that $\supp\phi\subset \{x\in\Rn :\|x\|\geq a\}$ with $a>0$, $a\in\R$. Then, since Theorem \ref{Estructura2} holds, there exist $N_{2}\in\N_{0}$ and constants $c_{k}$, $|k|\leq N_{2}$ such that 

\begin{equation}\label{GL5}
h_{\mu} f=\sum_{|k|\leq N_{2}}c_{k}T^{k}\delta_{\mu}.
\end{equation}

Therefore, applying the Hankel transform $h_{\mu}$ to both sides of (\ref{GL5}) and since $h_{\mu}=(h_{\mu})^{-1}$ we obtain that

\begin{eqnarray*}
	% \nonumber % Remove numbering (before each equation)
	f = h_{\mu}(h_{\mu} f)
	&=&\sum_{|k|\leq N_{2}}    c_{k}h_{\mu}(T^{k}\delta_{\mu})= \\
	&=& \sum_{|k|\leq N_{2}} c_{k}M_{k}^{\mu}y^{2k}y^{\mu+1/2}\\
	&=& \sum_{|k|\leq N_{2}} c_{k}M_{k}^{\mu}
	    y_{1}^{2k_{1}}\ldots y_{n}^{2k_{n}}y^{\mu+1/2},\\
\end{eqnarray*}
where we have used (\ref{HankelT^{k}}). The proof is this complete. 

\end{proof}

\appendix
\section{Equivalence of the seminorms $\gamma_{m,k}^{\mu}$ and $\lambda_{m,k}^{\mu}$}

The main result of this paper needs of the existence of another family of seminorms, different from the family $\gamma_{m,k}^{\mu}$, which is defined as

\begin{equation}\label{S2}
\lambda_{m,k}^{\mu}(\phi)=\sup_{x\in\Rn}|(1+\|x\|^2)^{m}x^{-\mu-1/2}S^{k}\phi(x)|, \qquad \phi\in\mathcal{H}_{\mu}.
\end{equation}

The construction of the family $\{\lambda_{m,k}^{\mu}\}_{m\in\N_{0},k\in\N_{0}^{n}}$ was motivated by the work of \cite{Betancor1} , \cite{Sanchez} and \cite{Zemanian1}. This multinorm is important because generates on $\mathcal{H}_\mu$ the same topology as the family $\{\gamma_{m,k}^{\mu}\}$.

\begin{remark}
	Let $k$ be a multi-index, the following equality is valid
	\begin{equation}\label{equiv1}
	x^{-\mu-1/2} S^{k}\phi(x)=
	\sum_{l=0}^{k}b_{l,k}x^{2l}T^{k+l}\{x^{-\mu-1/2}\phi(x)\}.
	\end{equation}
\end{remark}

\noindent This formula can be derived from the equation 

\begin{equation}\label{equiv2}
x_{i}^{-\mu_{i}-1/2} S_{\mu_{i}}^{k_{i}}\phi(x)=\sum_{l=0}^{k_{i}}b_{l,k_{i}}x_{i}^{2l}T_{i}^{k_{i}+l}\{x_{i}^{-\mu_{i}-1/2}\phi(x)\},
\end{equation}

\noindent where the constants $b_{j,k_{i}}$, $j=0,\ldots,k_{i}$,  are suitable real constants, only depending on $\mu_{i}$. The formula  (\ref{equiv2}) is due to Koh and Zemanian \cite[(9)]{Zemanian1} and is valid for every $k_{i}\in\N_{0}$. Indeed, if $k\in\N_{0}^{n}$, $k=(k_{1},\ldots,k_{n})$ then, 

 \begin{align*}
\left(x_{i}^{-\mu_{i}-1/2} S_{\mu_{i}}^{k_{i}}\right)&
 \left(x_{j}^{-\mu_{j}-1/2} S_{\mu_{j}}^{k_{j}}\right)\phi(x)=\\
&=\sum_{l_{i}=0}^{k_{i}}b_{l_{i},k_{i}}x_{i}^{2l_{i}}T_{i}^{k_{i}+l_{i}}\{x_{i}^{-\mu_{i}-1/2}
\sum_{l_{j}=0}^{k_{j}}b_{l_{j},k_{j}}x_{j}^{2l_{j}}T_{j}^{k_{j}+l_{j}}\{x_{j}^{-\mu_{j}-1/2}\phi(x)\}\}\\
&=\sum_{l_{i}=0}^{k_{i}}
\sum_{l_{j}=0}^{k_{j}}
b_{l_{i},k_{i}}b_{l_{j},k_{j}}
x_{i}^{2l_{i}}x_{j}^{2l_{j}}
T_{i}^{k_{i}+l_{i}}
T_{j}^{k_{j}+l_{j}}
\{x_{i}^{-\mu_{i}-1/2} x_{j}^{-\mu_{j}-1/2}\phi(x)\}.
\end{align*}

\noindent Repeating this process we obtain that 

\begin{align*}
& x^{-\mu-1/2} S^{k}\phi(x)=
  \left(x_{1}^{-\mu_{1}-1/2} S_{\mu_{1}}^{k_{1}}\right)\ldots
  \left(x_{n}^{-\mu_{n}-1/2} S_{\mu_{n}}^{k_{n}}\right)\phi(x)=\\
& \sum_{l_{1}=0}^{k_{1}}\ldots\sum_{l_{n}=0}^{k_{n}}
        b_{l_{1},k_{1}} \ldots b_{l_{n},k_{n}}
         x_{1}^{2l_{1}} \ldots \: x_{n}^{2l_{n}}
    T_{1}^{k_{1}+l_{1}} \ldots \: T_{n}^{k_{n}+l_{n}}
 \{x_{1}^{-\mu_{1}-1/2 }\ldots \: x_{n}^{-\mu_{n}-1/2}\phi(x)\}=\\
& \sum_{l=0}^{k}b_{l,k}x^{2l}T^{k+l}\{x^{-\mu-1/2}\phi(x)\}.
\end{align*}

On the other hand, from \cite[Propositions IV.2.2 and IV.2.4]{Sanchez}  we have that for all $k_{i}\in\N_{0}$, $i=1,\ldots,n$

\begin{equation*}\label{equiv3}
|T_{i}^{k_{i}}\{x_{i}^{-\mu_{i}-1/2}\phi(x)\}|\leq C_{i} \sup_{x_{i}\in(0,\infty)}
|x_{i}^{-\mu_{i}-1/2}S_{\mu_{i}}^{k_{i}}\phi(x)|.
\end{equation*}

So, we can generalize this inequality and obtain the following result

\begin{remark}
	Let $k$ be a multi-index, the following inequality is valid
	\begin{equation}\label{equiv4}
	|T^{k}\{x^{-\mu-1/2}\phi(x)\}|\leq C \sup_{x\in\Rn}
	|x^{-\mu-1/2}S^{k}\phi(x)|.
	\end{equation}
\end{remark}

Set $i,j\in\{1,\ldots,n\}$, $i\neq j$ and computing 
\begin{align*}
 |T_{i}^{k_{i}}T_{j}^{k_{j}}&\{x^{-\mu-1/2}\phi(x)\}|=
  |T_{i}^{k_{i}}\{x_{i}^{-\mu_{i}-1/2}
   T_{j}^{k_{j}}\{x_{j}^{-\mu_{j}-1/2}
  x^{-\mu-1/2+(\mu_{i}e_{i}+1/2)+(\mu_{j}e_{j}+1/2)}\phi(x)\}\}|\leq\\
&\leq C_{i} \sup_{x_{i}\in(0,\infty)}
  |x_{i}^{-\mu_{i}-1/2}S_{\mu_{i}}^{k_{i}}
  \{T_{j}^{k_{j}}\{x_{j}^{-\mu_{j}-1/2}
  x^{-\mu-1/2+(\mu_{i}e_{i}+1/2)+(\mu_{j}e_{j}+1/2)}\phi(x)\}\}|\\
& = C_{i} \sup_{x_{i}\in(0,\infty)}
 |T_{j}^{k_{j}}\{x_{j}^{-\mu_{j}-1/2}
 x^{-\mu-1/2+(\mu_{i}e_{i}+1/2)+(\mu_{j}e_{j}+1/2)}
 \left(x_{i}^{-\mu_{i}-1/2}S_{\mu_{i}}^{k_{i}}\right)\phi(x)\}|\\
& = C_{i} \sup_{x_{i}\in(0,\infty)}
 |T_{j}^{k_{j}}\{x_{j}^{-\mu_{j}-1/2}
 \left(x^{-\mu-1/2+(\mu_{j}e_{j}+1/2)}S_{\mu_{i}}^{k_{i}}\right)
 \phi(x)\}|\\
& \leq C_{i}C_{j}
 \sup_{x_{j},x_{i}\in(0,\infty)}
 |x_{j}^{-\mu_{j}-1/2}S_{\mu_{j}}^{k_{j}}\{
 x^{-\mu-1/2+(\mu_{j}e_{j}+1/2)}S_{\mu_{i}}^{k_{i}}
 \phi(x)\}|\\
& = C_{i}C_{j}
 \sup_{x_{j},x_{i}\in(0,\infty)}
 |x^{-\mu-1/2+(\mu_{j}e_{j}+1/2)}x_{j}^{-\mu_{j}-1/2}
 S_{\mu_{i}}^{k_{i}}
 S_{\mu_{j}}^{k_{j}}\phi(x)|\\
& = C_{i}C_{j}
 \sup_{x_{j},x_{i}\in(0,\infty)}|x^{-\mu-1/2}
 S_{\mu_{i}}^{k_{i}}
 S_{\mu_{j}}^{k_{j}}\phi(x)|.\\
 \end{align*}
\noindent The general case follows from an inductive argument.

\medskip

From (\ref{equiv1}) and (\ref{equiv4}) we obtain that the families of seminorms $\gamma_{m,k}^{\mu}$ and $\lambda_{m,k}^{\mu}$ are equivalents. 

\[
\left|(1+\|x\|^{2})^{m}T^{k}\{x^{-\mu-1/2}\phi(x)\}\right|\leq
 C \sup_{x\in\Rn}
|(1+\|x\|^{2})^{m}x^{-\mu-1/2}S^{k}\phi(x)|= C\lambda_{m,k}^{\mu}(\phi)
\]

\noindent therefore $\gamma_{m,k}^{\mu}\phi(x)\leq C\lambda_{m,k}^{\mu}\phi(x)$. On the other hand,  (\ref{equiv1}) imply that

\begin{align*}
\left|(1+\|x\|^{2})^{m}x^{-\mu-1/2} S^{k}\phi(x)\right|
&\leq\sum_{l=0}^{k}|b_{l,k}|\:
|(1+\|x\|^{2})^{m}x^{2l}T^{k+l}\{x^{-\mu-1/2}\phi(x)\}|\\
&\leq\sum_{l=0}^{k}|b_{l,k}|\:
|(1+\|x\|^{2})^{m+|l|}T^{k+l}\{x^{-\mu-1/2}\phi(x)\}|\\
&\leq\sum_{l=0}^{k}|b_{l,k}|\:
\gamma_{m+|l|,k+l}^{\mu}\phi(x),
\end{align*}

\noindent which leads to $\lambda_{m,k}^{\mu}\phi(x)\leq\sum_{l=0}^{k}|b_{l,k}|\:
\gamma_{m+|l|,k+l}^{\mu}\phi(x)$.

\end{document}